\documentclass[a4paper,11pt,centertags,draft]{amsart}
\usepackage[latin1]{inputenc}
\usepackage[english]{babel}
\usepackage{cite}
\usepackage{verbatim}
\usepackage{color}
\usepackage{hyperref}
\setcounter{page}{1}

\pagestyle{myheadings}
\usepackage{palatino,mathpazo}
\usepackage{amsmath,amsthm}
\usepackage{comment}




\usepackage[top=3.2cm, bottom=3.2cm, left=2.8cm,right=2.8cm]{geometry}


\newcommand{\diesis}{^\#}
\newtheorem{theo}{Theorem}[section]
\newtheorem{lemma}{Lemma}[section]

\theoremstyle{definition}
\newtheorem{definiz}{Definition}[section]
\newtheorem{rem}{Remark}[section]

\numberwithin{equation}{section}

\newcommand{\R}{\mathbb R}

\newcommand{\de}{\partial}


\begin{document}
\title[Sharp bounds for the first eigenvalue and the torsional
rigidity]{Sharp bounds for the first eigenvalue and the torsional
  rigidity related to some anisotropic operators}
\author[F. Della Pietra, N. Gavitone]{
  Francesco Della Pietra and Nunzia Gavitone
}
\address{Francesco Della Pietra \\
Universit\`a degli studi del Molise \\
Dipartimento di Bioscienze e Territorio -
Divisione di Fisica, Informatica e Matematica\\
Via Duca degli Abruzzi \\
86039 Termoli (CB), Italia.
}
\email{francesco.dellapietra@unimol.it}

 \address{
Nunzia Gavitone \\
Universit\`a degli studi di Napoli ``Federico II''\\
Dipartimento di Matematica e Applicazioni ``R. Caccioppoli''\\
80126 Napoli, Italia.
}
\email{nunzia.gavitone@unina.it}
\keywords{Eigenvalue problems, torsional rigidity, anisotropic
  perimeter, stability estimates.}
\subjclass[2000]{
  35P15, 35P30
}
\maketitle
\date{\today}
\begin{abstract}
In this paper we prove a sharp upper bound for the first Dirichlet
eigenvalue of a class of nonlinear elliptic operators which includes
the operator $\Delta_p u =\sum_i \frac{\partial}{\partial
  x_i} (|\nabla u|^{p-2}\! \frac{\partial u}{\partial x_i})$, that is
the $p$-Laplacian, and $\tilde \Delta_p u= \sum_i
\frac{\partial}{\partial x_i} (|\frac{\partial u}{\partial x_i}|^{p-2}
\frac{\partial u}{\partial x_i})$, namely the
pseudo-$p$-Laplacian. Moreover we prove a stability result by means of
a
suitable isoperimetric deficit. Finally, we give a 
sharp lower bound for the anisotropic $p$-torsional rigidity.
\end{abstract}
\section{Introduction}
Let $H:\R^n\rightarrow [0,+\infty[$, $n\ge 2$, be a convex,
$1$-homogeneous and  $C^1(\mathbb R^2\setminus\{0\})$ function. 
Here  we  deal with operators of the form
\begin{equation*}
    \mathcal Q_p u:= \sum_{i=1}^{n} \frac{\de}{\de x_i}
  \big(H(Du)^{p-1}H_{\xi_i}(Du)\big),
\end{equation*}
with $1<p<\infty$. In general, $\mathcal Q_p $ is highly nonlinear, and
extends some well-known classes of operators. In particular, for
$H(\xi)= ( \sum_k |\xi_k|^r )^{1/r}$, $r>1$, $\mathcal Q_p$ becomes
\begin{equation*}
  \mathcal Q_p u= \sum_{i=1}^{n} \frac{\de}{\de x_i} \left(
    \left( \sum_{k=1}^{n} \left| \frac{\de u}{\de x_k} \right|^r
    \right)^{(p-r)/r} \left| \frac{\de u}{\de x_i}
    \right|^{r-2}\frac{\de u}{ \de x_i}\right).
\end{equation*}
Note that for $r=2$, it coincides with the usual $p$-Laplace
operator, while for $r=p$ it is the so-called pseudo-$p$-Laplace
operator.

This kind of operator has been studied in several papers
(see for instance \cite{aflt}, \cite{ciasal}, \cite{dpg2},
\cite{dpg3}, \cite{ferkaw} for $p=2$, and \cite{bfk}, \cite{bkj06},
\cite{dpg4}, \cite{kn08} for $1<p<\infty$).

In this paper  we consider the eigenvalue problem associated to
$\mathcal Q_p$, namely
\begin{equation}\label{eig:intro}
  \left\{
    \begin{array}{ll}
      -\mathcal Q_p u= \lambda |u|^{p-2} u &
      \text{in }\Omega, \\ [.2cm]
      u = 0 & \text{on }\de\Omega,
    \end{array}
  \right.
\end{equation}
where $\Omega$ is a bounded convex open set of $\R^n$.

Our aim is to extend to the first eigenvalue $\lambda_p(\Omega)$ of
the problem \eqref{eig:intro}, some well-known estimates involving the
first eigenvalue $\lambda_{1,2}(\Omega)$  of the Dirichlet-Laplacian. In
this case, a classical result by P\'olya contained in \cite{po61}
gives the following upper bound for $\lambda_{1,2}(\Omega)$ in the plane:
\begin{equation}
\label{pol}
\lambda_{1,2}(\Omega) \le \left(\frac{\pi}{2}\right)^2
\frac{P^2(\Omega)}{|\Omega|^2},
\end{equation}
where $\Omega$ is a bounded simply connected  open set of $\R^2$ and
$|\Omega|$ and $P(\Omega)$ denote respectively the Lebesgue measure
and the Euclidean perimeter of $\Omega$.

Another classical result, due to Payne and Weinberger (see
\cite{pw61}), gives an  upper bound of $\lambda_{1,2}(\Omega)$
by means of an isoperimetric deficit. More precisely, if $\Omega$ is
a bounded, simply connected, open set of $\R^2$, denoting by
$\Omega\diesis$ the ball centered at the origin with the same measure
as $\Omega$, then 
\begin{equation}
  \label{pw}
  \lambda_{1,2}(\Omega) \le \lambda_{1,2}(\Omega\diesis) \left[1+C
    \left(\frac{P^2(\Omega)}{4\pi |\Omega|}-1\right) \right],
\end{equation}
where $C$ is a universal sharp constant, which can be explicitly
determined. Hence \eqref{pw}, together with the Faber-Krahn
inequality, that is 
\[
\lambda_{1,2}(\Omega\diesis) \le \lambda_{1,2}(\Omega),
\]
gives a stability estimate for $\lambda_{1,2}(\Omega)$, namely
\begin{equation}
  \label{stab1}
  0\le  \frac{ \lambda_{1,2}(\Omega) -
    \lambda_{1,2}(\Omega\diesis)}{\lambda_{1,2}(\Omega\diesis)} \le C
    \left(\frac{P^2(\Omega)}{4\pi |\Omega|}-1\right).
  \end{equation}
  Recently, in \cite{bnt10} the authors have proved
  an estimate similar to \eqref{stab1}, in any dimension, for
  the first Dirichlet-eigenvalue $\lambda_{1,p}(\Omega)$ of the
  $p$-Laplacian. In particular, they prove that, if $\Omega$ is a
  bounded convex open set of $\R^n$, then 
  \begin{equation}
  \label{stab2}
  \frac{ \lambda_{1,p}(\Omega) -
    \lambda_{1,p}(\hat\Omega)}{\lambda_{1,p}(\Omega)} \le C(n,p,\Omega)
    \left(1-\frac{n^{\frac{n}{n-1}}\omega_n^{\frac{1}{n-1}}|\Omega|}
      {P(\Omega)^{\frac{n}{n-1}}}\right),
  \end{equation}
  where $\hat\Omega$ is the ball centered at
  the origin with the same perimeter as $\Omega$. As matter of fact,
  being $\lambda_{1,p}(\hat\Omega)\le
  \lambda_{1,p}(\Omega\diesis)\le \lambda_{1,p}(\Omega)$, 
  we have that the left-hand side of \eqref{stab2} is nonnegative.

We stress that in \cite{carlo} the author gives, in the case $p=2$, the
best value of the constant $C(n,2,\Omega)$ in \eqref{stab2}. 

We recall that this kind of estimates have been studied also for a
class of fully nonlinear elliptic problems, involving the so-called
$k$-Hessian operators (see \cite{dpg5}). 

Our aim is to prove similar results for the first eigenvalue
$\lambda_p(\Omega)$ of \eqref{eig:intro} when $\Omega$ is a bounded,
convex, open set of $\R^n$. The first result we show is a P\'olya-type
estimate, namely that
 \begin{equation*}
   \label{polintro}
   \lambda_p(\Omega) \le  \left(\frac{\pi_p}{2}\right)^p
   \frac{P_H(\Omega)^{p}}{|\Omega|^{p}}.
 \end{equation*}
Here $P_H(\Omega)$ denotes the anisotropic perimeter of $\Omega$
relative to the norm $H$ (see Section 2 for the precise definitions).


The second result we prove regards a stability estimate.
If we denote by $H^o(\xi)$ the polar function of $H$, and by $\tilde
\Omega$ the level set of $H^o$ with the same anisotropic perimeter as
$\Omega$, our result reads as follows. 
\begin{equation}
  \label{stab3}
     \frac{ \lambda_p(\Omega) -
   \lambda_p(\tilde \Omega)}{\lambda_p(\Omega)} \le
C_\Omega
\left(1-\frac{n^{\frac{n}{n-1}}\kappa_n^{\frac{1}{n-1}}|\Omega|}
  {P_H(\Omega)^{\frac{n}{n-1}}} \right).
\end{equation}
Again the above inequality, in conjunction with the Faber-Krahn
inequality for $\mathcal Q_p$ (see \cite{bfk} and Section 2.2), gives a
stability estimate. 
 
Last part of the paper is devoted to give a lower bound to the
an\-isotropic $p$-torsional rigidity, namely the number
$\tau_p(\Omega)>0$ such that  
\begin{equation*}
  \tau_p(\Omega)=\int_\Omega H(Du_p)^p dx = \int_\Omega u_p dx,
\end{equation*}
where $u_p\in W_0^{1,p}(\Omega)$ is the unique solution of
\begin{equation*}
  \left\{
  \begin{array}{ll}
    -\mathcal Q_p u = 1 &\text{in }\Omega,\\
    u=0 &\text{on }\de\Omega.
  \end{array}
  \right.
\end{equation*}

In the Euclidean case, in \cite{po61} and \cite{cfg02} the
authors prove a lower bound for the $p$-torsional 
rigidity respectively for $p=2$ and for a general $1<p<+\infty$. Such
results can be summarized in the estimate 
\begin{equation}
  \label{gazz2}
 \tau_p(\Omega)\ge \frac{p-1}{2p-1}
 \frac{|\Omega|^{\frac{2p-1}{p-1}}}{P(\Omega)^{\frac{p}{p-1}}},
 \end{equation}
where $\Omega \subset \R^2$ is a bounded convex open set.

Our aim is also to prove a similar result in  the anisotropic case,
for a general norm $H$ of $\R^n$ (see Theorem \ref{teotor}).

In order to obtain the quoted estimates in the case $H(\xi)=|\xi|$,
the proof makes use of a particular class of test functions, depending
on the Euclidean distance to the boundary, introduced in \cite{po61}
and nowadays known as web functions (see for 
example \cite{cgweb}). It seems be natural to consider, in our case,
an analogous class of web functions, which depend on a distance
to the boundary defined according to the general norm $H$.  

The paper is organized as follows. In Section 2, we recall some useful
properties of the functions $H$ and $H^o$ and some basic definitions of
the anisotropic perimeter and of convex analysis. Moreover, we recall
the main properties of the first eigenvalue and of
the $p$-torsional rigidity of $\mathcal Q_p$. Then, in Section 
3 we prove some preliminary results involving the anisotropic distance
to the boundary, necessary to obtain the main theorems. Finally, in
Sections 4 and 5 we state precisely the main results and give the
proofs. 

\section{Notation and preliminaries}
\subsection{Anisotropic norm and perimeter}
\label{anintro}
Let $H:\R^n\rightarrow [0,+\infty[$, $n\ge 2$, be a convex
$C^1(\mathbb R^2\setminus\{0\})$ function such that
\begin{equation}\label{eq:omo}
  H(t\xi)= |t| H(\xi), \quad \forall \xi \in \R^n,\; \forall t \in
  \R.
\end{equation}
Moreover, suppose that there exist two positive constants $\alpha
\le \beta$ such that
\begin{equation}\label{eq:lin}
  \alpha|\xi| \le H(\xi) \le \beta|\xi|,\quad \forall \xi\in \R^n.
\end{equation}
 
We define the polar function $H^o\colon\R^n \rightarrow [0,+\infty[$ 
of $H$ as
\begin{equation}
  \label{eq:pol}
H^o(v)=\sup_{\xi \ne 0} \frac{\xi\cdot v}{H(\xi)}  
\end{equation}
where $\langle\cdot,\cdot\rangle$ is the usual scalar product of
$\R^n$. It is easy to verify that also $H^o$ is a convex function
which satisfies properties \eqref{eq:omo} and
\eqref{eq:lin}. Furthermore, 
\[
H(v)=\sup_{\xi \ne 0} \frac{\xi\cdot v}{H^o(\xi)}.
\]
The set
\[
\mathcal W = \{  \xi \in \R^n \colon H^o(\xi)< 1 \}
\]
is the so-called Wulff shape centered at the origin. We put
$\kappa_n=|\mathcal W|$, where $|\mathcal W|$ denotes the Lebesgue measure
of $\mathcal W$. More generally, we denote with $\mathcal W_r(x_0)$
the set $r\mathcal W+x_0$, that is the Wulff shape centered at $x_0$
with measure $\kappa_nr^n$, and $\mathcal W_r(0)=\mathcal W_r$.

The following properties of $H$ and $H^o$ hold true
(see for example \cite{bp}):
\begin{gather}
 H(\nabla H^o(\xi))=H^o(\nabla H(\xi))=1,\quad \forall \xi \in
\R^n\setminus \{0\}, \label{eq:H1} \\
H^o(\xi) \nabla H(\nabla H^o(\xi) ) = H(\xi) \nabla
H^o(\nabla H(\xi) ) = \xi,\quad \forall \xi \in
\R^n\setminus \{0\}. \notag
\end{gather}
\begin{definiz}[Anisotropic perimeter] Let $K$ be an
  open bounded set of $\mathbb R^n$ with Lipschitz boundary. The
  perimeter of $K$ is defined as the quantity
\[
P_H(K) = \int_{ \partial K} H(\nu_K) d\mathcal H^1.
\]
\end{definiz}

The anisotropic perimeter of a set $K$ is
finite if and only if the usual Euclidean perimeter $P(K)$ is
finite. Indeed, by properties \eqref{eq:omo} and \eqref{eq:lin} we
have that
\[
\frac{1}{\beta} |\xi| \le H^o(\xi) \le \frac{1}{\alpha} |\xi|,
\]
and then
\begin{equation*}\label{eq:per}
\alpha P(K) \le P_H(K) \le \beta P(K).
\end{equation*}

An isoperimetric inequality for the anisotropic perimeter holds,
namely
\begin{equation}
  \label{isop}
  P_H(K) \ge n\kappa_n^{\frac 1 n} |K|^{1-\frac 1 n}
\end{equation}
(see for example
\cite{bu}, \cite{dpf}, \cite{fomu}, \cite{aflt}). We stress that in
\cite{dpg1} an isoperimetric inequality for the
anisotropic relative perimeter in the plane is studied. 

\subsection{Quermassintegrals and the Steiner formula in the
  anisotropic case}
We recall some basic tools of convex analysis, well-known in the
Euclidean case.

Let $K$ be a convex body, and denote with $W^H_j(K)$,
$0\le j \le n$, the value
\[
W^H_j(K)=V\big[\underbrace{\mathcal W,\ldots,\mathcal W}_{j\;\rm
  times},\underbrace{K,\ldots,K}_{n-j\;\rm times}\big],
\]
where the right-hand side denotes the $j$-th mixed volume of order $n$
(see \cite{schn} and \cite{and}). In particular, $W^H_0(K)=|K|$,
$nW^H_1(K)=P_H(K)$ and $W^H_n(K)=\kappa_n$. In the Euclidean case,
$W^H_j$ is known as $j-$th quermassintegral of $K$. The monotonicity
properties of the mixed volumes (see \cite{schn}) give that
$W^H_j(K)$ is monotone increasing with respect to the inclusion of
convex sets.

Moreover, for $\delta>0$, the following Steiner formulas hold
(\cite{and}, \cite{schn}):
\begin{equation}
  \notag
|K+\delta \mathcal W|= |K| + P_H(K) \delta + \binom{n}{2} W^H_2(K) \delta^2
+\ldots + \binom{n}{p} W^H_p(K)\delta^p +\ldots+ \kappa_n\delta^n,
\end{equation}
and
\begin{multline}
\label{st2}
P_H(K\!+\!\delta \mathcal W) =\\= P_H(K) \! + \!
n(n-1) W^H_2(K) \delta + \!\ldots\! + n \binom{n-1}{p} W^H_{p+1}(K)\delta^p
+\!\ldots\!+n\kappa_n\delta^{n-1}.
\end{multline}

Formula \eqref{st2} immediately gives that
\begin{equation}
  \label{steinercons}
  \lim_{\delta\rightarrow 0^+}\frac{P_H(K+\delta \mathcal W)-P_H(K)}{\delta}=
  n(n-1)W_2^H(K).
\end{equation}

Finally, we recall the Aleksandrov-Fenchel inequalities, stated for
$W^H_j$ (see \cite[Section 2.7]{and}):
\begin{equation}
  \label{afineq}
\left( \frac{W^H_j(K)}{\kappa_n} \right)^{\frac{1}{n-j}} \ge \left(
  \frac{W^H_i(K)}{\kappa_n} \right)^{\frac{1}{n-i}}, \quad 0\le i < j
\le n-1,
\end{equation}
and the equality in \eqref{afineq} holds if and only if $K$ is
homothetic to $\mathcal W$.

In particular, for $i=0$ and $j=1$ we obtain the isoperimetric
inequality \eqref{isop}. Moreover, if $i=1$ and $j=2$, we have
\begin{equation*}
  \label{afineq2}
  W_2^{H}(K) \ge \kappa_n^{\frac{1}{n-1}} \left( \frac{P_H(K)}{n}
  \right)^{\frac{n-2}{n-1}}.
\end{equation*}

Finally, we recall that, under some particular assumptions on $H$,
when 
$K$ has $C^{1,1}$ boundary then
\[
W_2^{H}(K)= \int_{\de K} \kappa_H^K d P_H,
\]
where $\kappa_H^K$ is the anisotropic mean curvature of $K$ (see \cite{bccn}
for more details).

\subsection{Eigenvalue problems for $\mathcal Q_p$}
\label{seceig}
We deal with operators whose prototype is the following:
\begin{equation*}
  \label{intro:p=2}
  \mathcal Q_p v:= \sum_{i=1}^{n} \frac{\de}{\de x_i}
  \big(H(Dv)^{p-1}H_{\xi_i}(Dv)\big),
\end{equation*}
with $1<p<+\infty$. We suppose that $H$ verifies the hypotheses of the
subsection \ref{anintro}, assuming also that $H^p(\xi)$ is a
strictly convex function. 

In all this subsection we will denote with $\Omega$ a bounded
connected open set of $\R^n$.
Let us consider the eigenvalue problem associated to the operator
$\mathcal Q_p$ in $\Omega$, namely 
\begin{equation}
  \label{autq}
  \left\{
    \begin{array}{ll}
      -\mathcal Q_p u=\lambda |u|^{p-2}u &\text{in } \Omega,\\
      u=0 &\text{on } \de\Omega.
    \end{array}
  \right.
\end{equation}
In \cite{bfk} it has been proved the following result:
\begin{theo}
  There exists the first eigenvalue of \eqref{autq}, namely
  $\lambda_p(\Omega)>0$, and it is simple. Moreover, the first
  eigenfunctions have a sign and belong to $C^{1,\alpha}$. Finally,
  the following variational formulation holds: 
\begin{equation}
  \label{carvar}
  \lambda_p(\Omega)=\min_{u\in W_0^{1,p}(\Omega)\setminus\{0\}} \frac{\int_\Omega
    H(Du)^pdx}{\int_\Omega |u|^pdx}.
\end{equation}
\end{theo}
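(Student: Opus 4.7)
\smallskip

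\noindent\textbf{Proof proposal.} The plan is to attack the four claims of the theorem (existence/positivity of $\lambda_p(\Omega)$, simplicity, sign and $C^{1,\alpha}$ regularity of the first eigenfunctions, and the variational characterization \eqref{carvar}) by the direct method of the calculus of variations, combined with regularity results for quasilinear elliptic equations of $p$-Laplace type and an anisotropic version of Picone's inequality. I shall use crucially the coercivity bound \eqref{eq:lin}, the evenness in $\xi$ coming from \eqref{eq:omo}, and the strict convexity of $H^p$ assumed in this subsection.

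First I would define $\lambda:=\inf\{R(u): u\in W_0^{1,p}(\Omega)\setminus\{0\}\}$, where $R(u)=\int_\Omega H(Du)^p\,dx/\int_\Omega|u|^p\,dx$, and show it is attained. Given a minimizing sequence $\{u_k\}$ normalized by $\|u_k\|_p=1$, \eqref{eq:lin} yields $\alpha^p\int|Du_k|^p\le\int H(Du_k)^p\,dx$, so $\{u_k\}$ is bounded in $W_0^{1,p}(\Omega)$. Extract a subsequence with $u_k\rightharpoonup u$ weakly in $W_0^{1,p}$ and, by Rellich--Kondrachov, $u_k\to u$ strongly in $L^p$, so $\|u\|_p=1$. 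Since $\xi\mapsto H(\xi)^p$ is convex, the functional $u\mapsto\int_\Omega H(Du)^p\,dx$ is sequentially weakly lower semicontinuous, hence $R(u)\le\lambda$ and $u$ is a minimizer. Writing the Euler--Lagrange equation for $R$ shows $u$ solves \eqref{autq} with eigenvalue $\lambda$, and the reverse inequality $R(v)\ge\lambda$ for any other eigenfunction $v$ (obtained by testing \eqref{autq} with $v$ itself) identifies $\lambda$ as the first eigenvalue $\lambda_p(\Omega)$ and simultaneously proves \eqref{carvar}. Positivity of $\lambda_p(\Omega)$ follows from Poincar\'e and \eqref{eq:lin}.

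Next, because \eqref{eq:omo} gives $H(-\xi)=H(\xi)$, the functional $\int H(Du)^p\,dx$ is invariant under $u\mapsto|u|$, so $|u|$ is also a minimizer; hence I may assume $u\ge 0$. The growth and ellipticity bounds inherited from \eqref{eq:lin} and the strict convexity of $H^p$ place the equation \eqref{autq} within the scope of the regularity theory of Di\-Benedetto, Tolksdorf and Lieberman for quasilinear equations of $p$-Laplacian type, which yields $u\in C^{1,\alpha}(\overline{\Omega})$. Vazquez's strong maximum principle (valid for this class of operators) then upgrades $u\ge 0$ to $u>0$ in $\Omega$.

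The delicate point, and the main obstacle, is simplicity. I would follow the Belloni--Kawohl strategy based on an anisotropic Picone inequality: for convex $C^1$ $H^p$, if $u,v>0$ are sufficiently smooth one has
\begin{equation*}
H\!\left(D\!\left(\tfrac{u^p}{v^{p-1}}\right)\right)^{\!p-1}\!H_{\xi_i}\!\left(D\!\left(\tfrac{u^p}{v^{p-1}}\right)\right)\cdot 1 \;\le\; \cdots,
\end{equation*}
more precisely the integrated form
\begin{equation*}
\int_\Omega H(Du)^p\,dx \;\ge\; \int_\Omega H(Dv)^{p-1}H_\xi(Dv)\cdot D\!\left(\tfrac{u^p}{v^{p-1}}\right)dx,
\end{equation*}
with equality if and only if $u/v$ is constant; this is proved by pointwise convexity of $\xi\mapsto H(\xi)^p$ together with the homogeneity \eqref{eq:omo} and is strict whenever $H^p$ is strictly convex, as assumed. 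Given two positive first eigenfunctions $u,v$ normalized by $\int u^p=\int v^p=1$, I would use $u^p/v^{p-1}$ (suitably approximated to stay in $W_0^{1,p}$, e.g.\ replacing $v$ by $v+\varepsilon$ and passing to the limit) as a test function in the weak form of \eqref{autq} for $v$; Picone yields $\lambda_p(\Omega)\int u^p\le\lambda_p(\Omega)\int u^p$ with equality, forcing the equality case $u=cv$. This gives simplicity and completes the proof. The routine parts are the direct method and the regularity citation; the genuinely anisotropic input is the strict Picone inequality, whose rigidity statement depends on the strict convexity hypothesis on $H^p$.
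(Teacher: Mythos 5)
Your sketch is essentially correct, but note that the paper does not prove this theorem at all: it is quoted verbatim from the reference \cite{bfk} of Belloni, Ferone and Kawohl. Your outline (direct method with \eqref{eq:lin} for coercivity and weak lower semicontinuity, replacing $u$ by $|u|$ and invoking quasilinear regularity plus the strong maximum principle, and an anisotropic Picone/hidden-convexity argument for simplicity, whose rigidity uses the strict convexity of $H^p$ and the connectedness of $\Omega$) is precisely the standard strategy of that reference, so there is nothing to compare against within the paper itself.
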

On the other hand, in \cite{aflt} it is proved a P\'olya-Szeg\"o
principle related to $H$. In order to recall this result, we denote
with $\Omega^\star$ the Wulff shape centered at the origin with the
same Lebesgue measure as $\Omega$, and $v^\star$ the so-called
convex rearrangement of $v$ with respect to $H$, that is the level
sets of $v^\star$ have the same measure as the level sets of $v$ and
they are homothetic to the Wulff shape.
\begin{theo}\label{pzani}
  If $v\in W_0^{1,p}(\Omega)$, then
\begin{equation}
\label{pz}
\int_\Omega H(Dv)^pdx \ge \int_{\Omega^\star} H(Dv^\star)^pdx.
\end{equation}
\end{theo}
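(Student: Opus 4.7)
The plan is to adapt the classical P\'olya-Szeg\"o rearrangement argument to the anisotropic setting, using three ingredients: the coarea formula, H\"older's inequality on level sets, and the anisotropic isoperimetric inequality \eqref{isop}. By density in $W_0^{1,p}$ and lower semicontinuity of $v\mapsto\int_\Omega H(Dv)^p\,dx$, I reduce to the case $v\in C_0^\infty(\Omega)$, $v\ge 0$. An application of Sard's theorem then lets me restrict attention to regular values $t\in(0,\|v\|_\infty)$, so that $\{v=t\}$ is a smooth hypersurface with $|Dv|>0$.

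Let $\mu(t)=|\{v>t\}|$. Then $v$ and $v^\star$ share the distribution function $\mu$, and by construction $\{v^\star>t\}=\mathcal W_{r(t)}$ with $\kappa_n r(t)^n=\mu(t)$. The coarea formula yields the two identities
\[
\int_\Omega H(Dv)^p\,dx=\int_0^{\|v\|_\infty}\!\!\!\int_{\{v=t\}}\frac{H(Dv)^p}{|Dv|}\,d\mathcal H^{n-1}\,dt,\qquad -\mu'(t)=\int_{\{v=t\}}\frac{d\mathcal H^{n-1}}{|Dv|}.
\]
Writing $H(Dv)/|Dv|=(H(Dv)/|Dv|^{1/p})\cdot|Dv|^{-1+1/p}$, H\"older's inequality with conjugate exponents $p$ and $p/(p-1)$, combined with the $1$-homogeneity of $H$, gives
\[
P_H(\{v>t\})=\int_{\{v=t\}}H\!\left(\tfrac{Dv}{|Dv|}\right)d\mathcal H^{n-1}\le\left(\int_{\{v=t\}}\frac{H(Dv)^p}{|Dv|}\,d\mathcal H^{n-1}\right)^{\!1/p}\!(-\mu'(t))^{(p-1)/p}.
\]
Rearranging, raising to the $p$-th power, and applying \eqref{isop} to the superlevel sets $\{v>t\}$, I obtain
\[
\int_\Omega H(Dv)^p\,dx\ge\int_0^{\|v\|_\infty}\frac{(n\kappa_n^{1/n})^p\,\mu(t)^{p(n-1)/n}}{(-\mu'(t))^{p-1}}\,dt.
\]

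The final step is to recognize the right-hand side as $\int_{\Omega^\star}H(Dv^\star)^p\,dx$: for $v^\star$, each inequality above becomes an equality, because $\{v^\star>t\}$ is a Wulff shape, saturating \eqref{isop}, and $H(Dv^\star)/|Dv^\star|$ is constant on $\{v^\star=t\}$, making H\"older sharp. Since the integrand in the last display depends only on $\mu(t)$, which $v$ and $v^\star$ share, the identical computation performed on $\Omega^\star$ produces the same right-hand side, establishing \eqref{pz}. The main technical obstacle I expect is the rigorous handling of critical levels and the passage from smooth $v$ back to a general $W_0^{1,p}(\Omega)$ function, together with the continuity of the convex rearrangement under smooth approximation; these points are standard but delicate, and are precisely the content worked out carefully in \cite{aflt}.
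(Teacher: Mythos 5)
The paper does not prove Theorem \ref{pzani} at all: it is recalled from \cite{aflt}, and your sketch is precisely the convex-symmetrization argument of that reference (coarea formula, H\"older on level sets, the anisotropic isoperimetric inequality \eqref{isop}, then verification that the lower bound is attained by $v^\star$). So the strategy is the right one, and the reduction to smooth $v$ plus the deferral of the critical-level technicalities is acceptable as a plan. Two small points: the identity $-\mu'(t)=\int_{\{v=t\}}|Dv|^{-1}d\mathcal H^{n-1}$ is in general only the inequality $-\mu'(t)\ge\int_{\{v=t\}}|Dv|^{-1}d\mathcal H^{n-1}$, which is harmless because it enters your H\"older bound in the favourable direction; and note that for $v^\star$ you cannot literally "run the identical computation" (it would give the same inequality $\ge$, i.e.\ the wrong direction) -- you must either check equality at every step or compute $\int_{\Omega^\star}H(Dv^\star)^p\,dx$ directly from the radial profile $v^\star(x)=\varphi(H^o(x))$, which indeed reproduces $\int_0^{\|v\|_\infty}\bigl(n\kappa_n^{1/n}\bigr)^p\mu(t)^{p(n-1)/n}(-\mu'(t))^{1-p}\,dt$ when $\mu$ is absolutely continuous.

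The one genuine error is your justification of equality in H\"older for $v^\star$. With your splitting, equality holds iff $H(Dv^\star)$ is constant on the level set $\{v^\star=t\}$, not iff $H(Dv^\star)/|Dv^\star|$ is constant there. The latter quantity equals $1/|DH^o(x)|$ on $\{H^o(x)=r(t)\}$, and for a non-Euclidean norm $H$ this is \emph{not} constant on the Wulff sphere, so your stated reason is false. The conclusion is nevertheless correct for the right reason: writing $v^\star(x)=\varphi(H^o(x))$ with $\varphi$ decreasing, one has $Dv^\star=\varphi'(H^o(x))\,DH^o(x)$, hence by homogeneity and \eqref{eq:H1}, $H(Dv^\star)=|\varphi'(r(t))|\,H(DH^o(x))=|\varphi'(r(t))|$, which is constant on $\{v^\star=t\}$; this is what makes H\"older sharp, while the Wulff-shape level sets saturate \eqref{isop}. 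With that substitution, and the standard approximation/critical-set care you already point to in \cite{aflt}, your outline is sound.
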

Moreover, the equality case is characterized in the following result,
contained in \cite{etpolya} and \cite{fvpolya}:
\begin{theo}
  If $u$ is a nonnegative function in $W_0^{1,p}(\Omega)$ such that
  \[
  \left| \{|Du^\star|=0\}\cap \{ 0<u^\star<\|u\|_{\infty} \}  \right|=0,
    \]
    and
    \begin{equation}
      \label{pz=}
      \int_\Omega H(Du)^pdx = \int_{\Omega^\star} H(Du^\star)^pdx,
    \end{equation}
    then, up to translation, $u=u^\star$ a.e. in $\Omega$ and
    $\Omega=\Omega^\star$.
\end{theo}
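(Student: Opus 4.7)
The plan is to follow the Brothers–Ziemer strategy adapted to the anisotropic setting: read the Pólya–Szegő inequality as a chain of three inequalities (coarea, Hölder on level sets, anisotropic isoperimetric inequality), track when each link can be an equality, and then exploit the non-degeneracy hypothesis to upgrade ``almost every level set is a Wulff shape'' into ``$u$ itself is a convex rearrangement''.

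First I would recall that, for any $u\in W_0^{1,p}(\Omega)$ nonnegative, the coarea formula gives
\[
\int_\Omega H(Du)^p\,dx = \int_0^{\|u\|_\infty}\!\!\left(\int_{\{u=t\}} \frac{H(Du)^p}{|Du|}\,d\mathcal H^{n-1}\right)dt,
\]
and for the rearrangement $u^\star$ an analogous identity holds with the inner integral being literally computable, because level sets of $u^\star$ are Wulff shapes $\mathcal W_{r(t)}$. The distribution function $\mu(t)=|\{u>t\}|=|\{u^\star>t\}|$ is absolutely continuous, with $-\mu'(t)=\int_{\{u=t\}} |Du|^{-1}d\mathcal H^{n-1}$ for a.e.\ $t$ (on the regular part). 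Applying Hölder with exponents $p$ and $p'$ in the inner integral against the measure $|Du|^{-1}d\mathcal H^{n-1}$, one obtains
\[
\int_{\{u=t\}}\!\! \frac{H(Du)^p}{|Du|}\,d\mathcal H^{n-1}\ge \frac{\left(\int_{\{u=t\}} H(\nu)\,d\mathcal H^{n-1}\right)^{\!p}}{(-\mu'(t))^{p-1}} = \frac{P_H(\{u>t\})^p}{(-\mu'(t))^{p-1}},
\]
where I used $H(Du)=H(\nu)|Du|$ a.e.\ on $\{u=t\}$ with $\nu=Du/|Du|$. Then the anisotropic isoperimetric inequality \eqref{isop} bounds $P_H(\{u>t\})$ from below by $P_H(\{u^\star>t\})$, and combining everything reproduces the inequality \eqref{pz}. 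The key point is that the very same chain, applied to $u^\star$, is an \emph{equality} in each step.

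Suppose now \eqref{pz=} holds. Then for a.e.\ $t\in(0,\|u\|_\infty)$ both the Hölder step and the anisotropic isoperimetric step must be equalities. Equality in the isoperimetric inequality for convex (indeed, finite-perimeter) sets forces, by the rigidity in \eqref{afineq} for $i=0$, $j=1$, that $\{u>t\}$ is equivalent up to a negligible set to a Wulff shape $\mathcal W_{r(t)}(x(t))$ of the correct radius. Equality in Hölder forces $H(\nu)$ to be constant (as a function on $\{u=t\}$ with respect to the measure $|Du|^{-1}d\mathcal H^{n-1}$); combined with $\{u=t\}$ being a sphere in the $H^o$-metric, this is automatic.

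Next I would show the centers $x(t)$ are independent of $t$. Since super-level sets are nested, $\mathcal W_{r(s)}(x(s))\subset \mathcal W_{r(t)}(x(t))$ whenever $s>t$; because $r(s)<r(t)$, nestedness of Wulff shapes together with strict monotonicity of $r$ on a set of full measure in $(0,\|u\|_\infty)$ forces $x(s)=x(t)$. Here is where I expect the main obstacle: strict monotonicity of $r(t)$ is \emph{not} automatic — exactly the flat parts of $u^\star$ in its range would allow $x(t)$ to jump. This is precisely what the hypothesis $|\{|Du^\star|=0\}\cap\{0<u^\star<\|u\|_\infty\}|=0$ rules out, because on the set where $r(t)$ is constant on an interval of $t$'s, the corresponding annular region in $\Omega^\star$ carries $|Du^\star|=0$ with positive measure, contradicting the assumption. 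Translating $\Omega$ so that $x(t)\equiv 0$, each super-level set $\{u>t\}$ equals $\mathcal W_{r(t)}=\{u^\star>t\}$ up to null sets, whence $u=u^\star$ a.e.\ and, taking $t\to 0^+$, $\Omega=\Omega^\star$. I would cite \cite{etpolya,fvpolya} for the technical delicacies (in particular the rigidity of the anisotropic isoperimetric inequality for level sets and the measure-theoretic handling of the set $\{Du=0\}$), which is where most of the work sits.
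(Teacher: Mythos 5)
First, a contextual point: the paper does not prove this theorem at all — it is quoted verbatim from \cite{etpolya} and \cite{fvpolya} — so your sketch has to be measured against the Brothers--Ziemer scheme implemented in those references, which is indeed the strategy you outline. Two of your steps, however, are genuinely flawed as written. Equality in the H\"older step, taken with respect to the measure $|Du|^{-1}\,d\mathcal H^{n-1}$ on $\{u=t\}$, forces $H(Du)$ — not $H(\nu)$ — to be constant $\mathcal H^{n-1}$-a.e.\ on $\{u=t\}$ for a.e.\ $t$. This is not an ``automatic'' fact to be discarded: it is exactly the information that powers the rest of the argument, since together with $|\{u>t\}|=|\{u^\star>t\}|$ and the equality of perimeters it identifies $H(Du)$ on $\{u=t\}$ with the constant value of $H(Du^\star)$ on the corresponding Wulff sphere. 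Moreover, for the rigidity of the isoperimetric step you cannot appeal to the equality case of \eqref{afineq}, which concerns convex bodies: the superlevel sets of a general $W_0^{1,p}$ function are merely sets of finite perimeter, and the correct tool is the uniqueness part of the Wulff theorem for finite-perimeter sets (e.g.\ \cite{fomu}).

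The more serious gap is the constancy of the centers. Your claim that nestedness of the Wulff shapes together with strict monotonicity of $r(t)$ forces $x(s)=x(t)$ is false: $\mathcal W_{r(s)}(x(s))\subset\mathcal W_{r(t)}(x(t))$ is equivalent to $H^o(x(s)-x(t))\le r(t)-r(s)$, which allows the centers to drift at speed up to $|r'|$; for instance $x(t)=cte_1$ with $c\,H^o(e_1)$ small and $r(t)$ strictly decreasing gives nested, non-concentric Wulff shapes that do occur as level sets of a Sobolev function. What actually excludes drifting centers is the constancy of $H(Du)$ on a.e.\ level set (the very piece of information your reading of the H\"older equality throws away): heuristically, a moving center makes the normal velocity of the level sets, hence $H(Du)$, non-constant along them. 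Making this rigorous for a non-smooth function — fine properties of $Du$ on level sets, absolute continuity of the relevant set functions, and the role of the hypothesis $|\{|Du^\star|=0\}\cap\{0<u^\star<\|u\|_\infty\}|=0$ (which is needed for more than ruling out plateaus of $r$) — is precisely where the work of Brothers--Ziemer and of \cite{etpolya}, \cite{fvpolya} lies. Deferring these points to the references is consistent with what the paper itself does, but it means your sketch, as it stands, does not close the crucial step from ``a.e.\ level set is a Wulff shape'' to ``$u=u^\star$ up to translation''.
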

An immediate consequence of Theorem \ref{pzani} is the Faber-Krahn
inequality for $\mathcal Q_p$ (see again \cite{bfk}):
\begin{equation*}
\lambda_p(\Omega) \ge \lambda_p(\Omega^\star).
\end{equation*}

As matter of fact,  by standard argument it is possible to show that
when $\Omega$ is homothetic to a Wulff shape, the eigenfunctions
related to $\lambda_p$ inherit some symmetry properties. More
precisely, we have the following result.
\begin{theo}
Let $v\in C^{1,\alpha}$ a positive solution of 
\begin{equation}
  \label{autwulf}
  \left\{
    \begin{array}{ll}
      -\mathcal Q_p v=\lambda_p |v|^{p-2}v &\text{in } \mathcal W_R,\\
      v=0 &\text{on } \de  \mathcal W_R.
    \end{array}
  \right.
\end{equation}
Then, there exists a function $\varphi(r)$, $r=H^o(x)$, and $x \in 
\mathcal W_R$, such that 
\begin{equation*}\left\{
   \begin{array}{lr}
v(x)=\varphi(r), & r \in [0,R]\\
\varphi>0\text{ in }[0,R[,&\varphi(R)=0,\\
\varphi'<0\text{ in }]0,R],&\varphi'(0)=0.
\end{array}
\right.
\end{equation*}
\end{theo}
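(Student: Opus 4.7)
The plan is to exploit the simplicity of $\lambda_p(\mathcal W_R)$ together with the P\'olya--Szeg\"o principle of Theorem \ref{pzani}, using the crucial fact that the Wulff shape $\mathcal W_R$ is invariant under convex rearrangement, $(\mathcal W_R)^\star=\mathcal W_R$. After normalizing $v$ so that $\|v\|_{L^p(\mathcal W_R)}=1$, the variational characterization \eqref{carvar} gives $\int_{\mathcal W_R} H(Dv)^p\,dx=\lambda_p(\mathcal W_R)$. Letting $v^\star$ denote the convex rearrangement of $v$, equimeasurability and Theorem \ref{pzani} show that
\[
\int_{\mathcal W_R} H(Dv^\star)^p\,dx\le \int_{\mathcal W_R} H(Dv)^p\,dx=\lambda_p(\mathcal W_R),\qquad \|v^\star\|_{L^p}=1,
\]
so $v^\star$ is also a positive first eigenfunction on $\mathcal W_R$. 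The simplicity of $\lambda_p(\mathcal W_R)$ then forces $v=v^\star$ a.e.; by construction, the super-level sets of $v^\star$ are concentric Wulff shapes, so there exists a nonincreasing $\varphi\colon[0,R]\to[0,+\infty[$ with $v(x)=\varphi(H^o(x))$, satisfying $\varphi>0$ on $[0,R[$ and $\varphi(R)=0$ by the positivity of $v$ and the Dirichlet boundary condition in \eqref{autwulf}.

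To upgrade $\varphi'\le 0$ to the strict inequality $\varphi'<0$ on $]0,R]$ and to obtain $\varphi'(0)=0$, I substitute $v(x)=\varphi(r)$, $r=H^o(x)$, into \eqref{autwulf}. Using \eqref{eq:H1} (in particular $\nabla H(\nabla H^o(x))=x/H^o(x)$) together with Euler's identity $x\cdot\nabla H^o(x)=H^o(x)$, a direct computation reduces the PDE to the radial ODE
\[
-\bigl(r^{n-1}|\varphi'(r)|^{p-2}\varphi'(r)\bigr)'=\lambda_p\, r^{n-1}\varphi(r)^{p-1},\qquad 0<r<R.
\]
The $C^{1,\alpha}$-regularity of $v$ at the origin, combined with $v(x)=\varphi(H^o(x))$, forces $\varphi'(0)=0$. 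Integrating the ODE from $0$ to $r\in\,]0,R]$ then gives
\[
r^{n-1}|\varphi'(r)|^{p-2}\varphi'(r)=-\lambda_p\int_0^r s^{n-1}\varphi(s)^{p-1}\,ds<0,
\]
since $\varphi>0$, which yields $\varphi'(r)<0$ on $]0,R]$ and completes the proof.

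The most delicate point is the radial ODE derivation: the vector field $x/H^o(x)$ is singular at the origin, so the divergence computation has to be performed on $\R^n\setminus\{0\}$ and the behaviour at $0$ handled via the $C^{1,\alpha}$-regularity of $v$. A minor subtlety in the first step is that the general equality case of the P\'olya--Szeg\"o inequality ordinarily requires the nondegeneracy condition $|\{|Dv^\star|=0\}\cap\{0<v^\star<\|v\|_\infty\}|=0$; invoking simplicity in place of that equality statement bypasses this issue cleanly.
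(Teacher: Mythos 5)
Your proof is correct, and it follows the same basic symmetrization strategy as the paper: normalize $v$, note $(\mathcal W_R)^\star=\mathcal W_R$, and show $v=v^\star$ so that $v$ is a decreasing function of $H^o(x)$. There are two genuine differences. First, to identify $v$ with $v^\star$ the paper invokes the equality-case characterization \eqref{pz=} of the anisotropic P\'olya--Szeg\"o inequality (together with \eqref{carvar}, \eqref{pz} and the regularity of $v$), whereas you observe that $v^\star$ is itself a minimizer of the Rayleigh quotient, hence a first eigenfunction, and conclude $v=v^\star$ from simplicity and the common normalization; as you note, this sidesteps the nondegeneracy hypothesis $\left| \{|Dv^\star|=0\}\cap \{ 0<v^\star<\|v\|_{\infty} \}\right|=0$ required by the equality theorem, which the paper only covers implicitly through ``regularity of $v$''. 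Second, the paper stops at ``$v$ is symmetrically decreasing with respect to $H^o$, and this gives the thesis'', leaving the strict statements $\varphi'<0$ in $]0,R]$ and $\varphi'(0)=0$ unargued; you supply them by reducing \eqref{autwulf}, via \eqref{eq:H1} and Euler's identity, to the one-dimensional equation
\begin{equation*}
-\bigl(r^{n-1}|\varphi'|^{p-2}\varphi'\bigr)'=\lambda_p\, r^{n-1}\varphi^{p-1},
\end{equation*}
and integrating from $0$, where the boundary term vanishes because $\varphi'$ is bounded and $n\ge 2$. This computation is standard and correct (it should be carried out in the weak formulation with radial test functions, as you acknowledge), so your argument is if anything more complete than the paper's; what the paper's route buys is brevity, at the cost of leaving the equality-case hypothesis and the strict monotonicity to the reader.
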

\begin{proof}
  Let $v$ be a first eigenfunction of \eqref{autwulf} such that
  $\|v\|_{L^p}=1$. Then, by simplicity of $\lambda_p$, $v$ is unique and
\begin{equation*}
  \lambda_p(\mathcal W_R)=\int_{\mathcal W_R} H(Dv)^p dx.
\end{equation*}
By \eqref{carvar}, \eqref{pz}, \eqref{pz=} and regularity of $v$, it
follows that $v=v^{\star}$ and then $v$ is a symmetrically decreasing
function with respect to $H^o$, and this gives the thesis. 
\end{proof}
\subsection{Anisotropic $p$-torsional rigidity}
Let us consider the following problem:
\begin{equation}
  \label{eq:ptor2}
  \left\{
  \begin{array}{ll}
    -\mathcal Q_p u = 1 &\text{in }\Omega,\\
    u=0 &\text{on }\de\Omega,
  \end{array}
  \right.
\end{equation}
with $\Omega$ bounded open set of $\R^n$, $n\ge 2$. As before, we
suppose that $H$ verifies the hypotheses of the subsection
\ref{anintro}, assuming also that $H^p(\xi)$ is a strictly convex
function.  

Classical results  guarantee that \eqref{eq:ptor2} admits a solution
$u_p\in W_0^{1,p}(\Omega)$. Moreover, such solution is unique. Indeed,
suppose that $u_1$ and $u_2$ are two solutions of
\eqref{eq:ptor2}. Then, for any $\varphi\in W_0^{1,p}(\Omega)$, we
have 
\[
\int_\Omega H(Du_i)^{p-1}H_\xi(Du_i)\cdot D\varphi dx =\int_\Omega
\varphi\, dx,\quad i=1,2.
\]
Choosing $\varphi=u_1-u_2$ for $i=1,2$ and subtracting we have
\[
\int_\Omega \left(H^{p-1}(Du_1)H_\xi(Du_1) -H^{p-1}(Du_2)H_\xi
  (Du_2)\right)\cdot \left(Du_1-Du_2\right) dx = 0.
\]
Hence, being $H^p$ is strictly convex, this may happen if and only if
$u_1=u_2$ in $\Omega$. As matter of fact, the solution of
\eqref{eq:ptor2} is positive in $\Omega$. 

In view of the above considerations, we define the $p$-torsional
anisotropic rigidity of $\Omega$ the number $\tau_p(\Omega)>0$ such
that  
\begin{equation*}
  \label{eq:ptor1}
  \tau_p(\Omega)=\int_\Omega H(Du_p)^p dx = \int_\Omega u_p dx,
\end{equation*}
where $u_p\in W_0^{1,p}(\Omega)$ is the unique solution of
\eqref{eq:ptor2}.

A characterization of the anisotropic $p$-torsional rigidity is
provided by the equality
$\tau_p(\Omega)=\sigma(\Omega)^{\frac{1}{p-1}}$, where
$\sigma(\Omega)$ is the best constant in the Sobolev inequality
\[
\|u\|_{L^1(\Omega)}^p \le \sigma(\Omega) \|H(Du)\|^p_{L^p(\Omega)},
\]
that is
\begin{equation}
  \label{tors0}
\tau_p(\Omega)^{p-1} = \sigma(\Omega)= \max_{\substack{\psi \in
    W_0^{1,p}(\Omega) \setminus\{0\} \\ \psi\ge 0}}
\dfrac{\left(\displaystyle\int_\Omega \psi \, 
    dx\right)^p}{\displaystyle\int_\Omega H(D\psi)^p dx},
\end{equation}
and the solution $u_p$ of \eqref{eq:ptor2} realizes the maximum
in \eqref{tors0}. Indeed, let $\varphi\in
W_0^{1,p}(\Omega)$, $\varphi\not\equiv 0$ a nonnegative test function for
\eqref{eq:ptor2}. Using 
the definition \eqref{eq:pol} of polar function and \eqref{eq:H1}, we
have that
\[
H_\xi(Du_p)\cdot D\varphi \le H^o(H_\xi(Du_p))H(D\varphi)=H(D\varphi).
\]
Hence, it follows that
\begin{multline}
  \label{polyaequiv}
\int_\Omega \varphi\,dx = \int_\Omega H^{p-1}(Du_p) H_\xi(Du_p)\cdot
D\varphi\,dx \le \\ \le \left(\int_\Omega H(Du_p)^p\,dx\right)^{\frac {p-1}
  {p}} \left(\int_\Omega H(D\varphi)^p dx
\right)^{\frac 1 p}= \tau_p(\Omega)^{\frac{p-1}{p}} \left(\int_\Omega
  H(D\varphi)^p dx 
\right)^{\frac 1 p}.
\end{multline}
Clearly, the function $u_p$ verifies the equality in
\eqref{polyaequiv}, and then \eqref{tors0} holds. 

A consequence of the anisotropic P\'olya-Szeg\"o inequality (Theorem
\ref{pzani}) is the following upper bound for $\tau_p(\Omega)$.
\begin{theo}
  Let $\Omega$ be a bounded open set of $\mathbb R^n$. Then,
  \begin{equation}
    \label{uptor}
  \tau_p(\Omega) \le \tau_p(\Omega^\star),
  \end{equation} 
  where $\Omega^\star$ is the Wulff shape centered at the origin with
  the same Lebesgue measure as $\Omega$. 
\end{theo}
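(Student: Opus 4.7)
The plan is to deduce the inequality directly from the variational characterization \eqref{tors0} of the anisotropic $p$-torsional rigidity combined with the anisotropic P\'olya--Szeg\"o principle (Theorem \ref{pzani}). Since $\tau_p(\Omega)$ is realized as a maximum of a Rayleigh-type quotient over nonnegative functions in $W_0^{1,p}(\Omega)$, it is natural to test the corresponding quotient on $\Omega^\star$ with the convex rearrangement of a maximizer on $\Omega$.

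More concretely, first I would take $u_p \in W_0^{1,p}(\Omega)$ to be the unique positive solution of \eqref{eq:ptor2}, which realizes the maximum in \eqref{tors0}, so that
\begin{equation*}
\tau_p(\Omega)^{p-1} = \frac{\left(\int_\Omega u_p\, dx\right)^p}{\int_\Omega H(Du_p)^p\, dx}.
\end{equation*}
Let $u_p^\star$ denote the convex rearrangement of $u_p$ with respect to $H$, which belongs to $W_0^{1,p}(\Omega^\star)$. By equimeasurability of $u_p$ and $u_p^\star$,
\begin{equation*}
\int_\Omega u_p\, dx = \int_{\Omega^\star} u_p^\star\, dx,
\end{equation*}
while by the anisotropic P\'olya--Szeg\"o inequality \eqref{pz},
\begin{equation*}
\int_\Omega H(Du_p)^p\, dx \ge \int_{\Omega^\star} H(Du_p^\star)^p\, dx.
\end{equation*}

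Combining these two facts, the Rayleigh-type quotient evaluated at $u_p^\star$ on $\Omega^\star$ is not smaller than the one at $u_p$ on $\Omega$. Using $u_p^\star$ as a test function in the variational characterization \eqref{tors0} on $\Omega^\star$, we obtain
\begin{equation*}
\tau_p(\Omega)^{p-1} = \frac{\left(\int_\Omega u_p\, dx\right)^p}{\int_\Omega H(Du_p)^p\, dx} \le \frac{\left(\int_{\Omega^\star} u_p^\star\, dx\right)^p}{\int_{\Omega^\star} H(Du_p^\star)^p\, dx} \le \sigma(\Omega^\star) = \tau_p(\Omega^\star)^{p-1},
\end{equation*}
and taking the $(p-1)$-th root yields \eqref{uptor}.

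There is essentially no obstacle: all the ingredients (existence and positivity of $u_p$, equimeasurability of rearrangements, and the anisotropic P\'olya--Szeg\"o inequality) are already available from the preceding subsections. The only point worth a line of care is to recall that $u_p^\star \in W_0^{1,p}(\Omega^\star)$ is a legitimate competitor in \eqref{tors0} on the Wulff shape $\Omega^\star$, which follows from the construction of the convex rearrangement. No strict-convexity or regularity arguments beyond those already invoked are needed, since we are only asserting the inequality \eqref{uptor} and not its equality case.
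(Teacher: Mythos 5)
Your proposal is correct and is essentially identical to the paper's own proof: both use the maximizer $u_p$ of \eqref{tors0}, preserve the numerator by equimeasurability, decrease the denominator via the anisotropic P\'olya--Szeg\"o inequality \eqref{pz}, and then test \eqref{tors0} on $\Omega^\star$ with $u_p^\star$. No differences worth noting.
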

\begin{proof}
  The characterization of $\tau_p(\Omega)$ in \eqref{tors0}, the
  inequality \eqref{pz} and the properties of rearrangements give that
  \[
 \tau_p(\Omega)^{p-1}= \frac{\left(\displaystyle\int_\Omega u_p\,
     dx\right)^p}{\displaystyle\int_\Omega H(Du_p)^p\, dx} \le
 \frac{\left(\displaystyle\int_\Omega 
     u_p^\star\, dx\right)^p}{\displaystyle \int_\Omega
   H(Du_p^\star)^p\, dx} \le \tau_p(\Omega^\star)^{p-1}, 
 \]
 where $u^\star$ is the convex rearrangement of $u$ with respect to
 $H$. 
\end{proof}
\begin{rem}
  Due to the symmetry of the problem, the value of
  $\tau_p(\Omega^\star)$ can be explicitly calculated, obtaining from
  \eqref{uptor} that 
  \[
  \tau_p(\Omega)\le
  \frac{1}{n^{p'}\kappa_n^{p'/n}}\frac{n}{p'+n}|\Omega|^{\frac{p'}{n}+1},
  \]
  where $p'=p/(p-1)$ and $\kappa_n$ is the measure of the Wulff shape
  $\mathcal W$.
\end{rem}

\subsection{$p$-circular functions}
\label{pcirc}
Now we recall the definition and some basic properties of certain
generalized trigonometric functions called $p$-circular functions. We
refer the reader, for example, to \cite{bbcdg,l95,rw99,dp87}.

Let us consider the function $F_p\colon [0,(p-1)^{1/p}] \to \R$
defined as
\[
F_p(x)=\int_0^x \frac{dt}{[1-t^p/(p-1)]^{1/p}}.
\]
Denote by $z(s)$ the inverse function of $F$ which is defined on the
interval $\Big[0, \frac{\pi_p}{2}\Big]$, where
  \begin{equation}
\label{defpp}
\pi_p=2\int_0^{(p-1)^{1/p}} \frac{dt}{[1-t^p/(p-1)]^{1/p}}.
\end{equation}
The $p$-sine function, $\sin_p$ is the following periodic extension of
$z(t)$:
\[
\sin_p t= \left\{
    \begin{array}{ll}
z(t), &  t \in \Big[0,\frac{\pi_p}{2}\Big],\\[.15cm]
{z\left({\pi_p}-t\right)}, &  t \in \Big[\frac{\pi_p}{2},\pi_p
\Big],\\[.15cm]
- \sin_p(-t), & t \in \Big[-\pi_p, 0\Big],
\end{array}
\right.
\]
and it is extended periodically to all $\R$, with period $2\pi_p$.

Hence $\sin_p$ is a  odd $2\pi_p$-periodic function defined on the
whole real line and it coincides with the usual sine function when $p=2$.

The $p$-cosine function is defined as follows
\[
\cos_p t= \sin_p\left(t+ \frac{\pi_p}{2}\right),
\]
and it is an even $2 \pi_p$-periodic function.

The study of $p-$circular functions is motivated also by the
connection  with the one-dimensional $p-$Laplacian. Indeed, the
function $u(t)= {a}{\lambda^{-1/p}} \sin_p \lambda^{1/p}(t-t_0)$
is the unique solution of the problem 
\[
\left\{
    \begin{array}{l}
-\left( |u'|^{p-2}u'\right)'=\lambda |u|^{p-2}u, \\[.15cm]
u(t_0)=0, \quad u'(t_0)=a.
\end{array}
\right.
\]

On the other hand, if we consider the minimum problem
\[
\lambda=\min_{\substack{\phi\in W^{1,p}([0,\pi_p/2])\\
    u'(0)=0,\,u(\frac{\pi_p}{2})=0 }} \frac{\int_0^{\frac{\pi_p} {2}} |\phi'|^p dt
}{\int_0^{\frac{\pi_p} {2}} |\phi|^p dt }, 
\]
the value $\lambda$ is reached by the solutions of the problem
\[
\left\{
    \begin{array}{l}
      -\left( |u'|^{p-2}u'\right)'=\lambda |u|^{p-2}u, \\[.15cm]
    u'(0)=0,\quad  u(\frac{\pi_p}{2})=0.
    \end{array}
\right.
\]
Then, using the explicit expression of the solutions, it is easy to
show that
\[
\lambda_k=(2k+1)^p,\quad k\in \mathbb N_0.
\]
Hence, $\lambda_0=1$ is the first eigenvalue, reached at the functions
$\phi(t)=a\sin_p\left(t-\frac{\pi_p}{2}\right)=-a\cos_p t$, for any
constant $a$. Such computations will be useful in the next sections.

Finally, we recall that $\left(\frac{\pi_p}{L}\right)^p$ is the first
eigenvalue of the $p$-Laplacian on $[0,L]$, without distinction
between the Neumann and the Dirichlet conditions.

\section{Some useful preliminary results}
Let $\Omega$ be a bounded convex open set of $\R^n$, and $d_H(x)$ the
anisotropic distance of a point $x\in \Omega$ to the boundary $\de
\Omega$, that is
\[
d_H(x)= \inf_{y\in \de \Omega} H^o(x-y).
\]
By the property \eqref{eq:H1}, the distance function $d_H(x)$
satisfies
\begin{equation*}
  H(D d_H(x))=1.
\end{equation*}
Finally, we observe that the convexity of $\Omega$ gives that $d_H(x)$
is concave.

For further properties of the anisotropic distance function we refer
the reader to \cite{cm07}.

We denote by
\[
\Omega_t=\{ x\in\Omega\colon d_H(x)>t\}, \quad t\in [0,r_\Omega],
\]
where $r_\Omega$ is the anisotropic inradius of $\Omega$, that is
$r_\Omega= \sup \left\{r>0\colon \mathcal W_r(x_0) \subset \Omega,
  x_0\in \Omega\right\}$.

The general Brunn-Minkowski Theorem (see \cite[p.339]{schn}) and the
concavity of the an\-isotropic distance function give that the function
$P_H(\Omega_t)$ is concave in $[0,r_\Omega]$. Hence, the function
$P_H(\Omega_t)$, $t\in [0,r_\Omega]$ is a decreasing and absolutely
continuous.
\begin{lemma}
  \label{derquer}
  For almost every $t\in ]0,r_\Omega[$,
  \begin{equation}
    \label{lemmader0}
    -\frac{d}{dt} |\Omega_t| = P_H(\Omega_t),
  \end{equation}
  and
  \begin{equation}
    \label{lemmader}
    -\frac{d}{dt} P_H(\Omega_t) \ge n(n-1)W^H_2(\Omega_t),
  \end{equation}
  where the equality sign in \eqref{lemmader} holds if $\Omega$ is
  homothetic to a Wulff shape.
\end{lemma}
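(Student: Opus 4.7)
My plan is to deduce \eqref{lemmader0} from the anisotropic coarea formula and \eqref{lemmader} from a Minkowski-sum inclusion combined with the Steiner consequence \eqref{steinercons}.

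For \eqref{lemmader0} I would apply the coarea formula to the Lipschitz function $d_H$, which satisfies $H(Dd_H)=1$ almost everywhere by the identity recalled just before the statement. This yields, for $s\in[0,r_\Omega)$,
\[
|\Omega_s|=\int_{\Omega_s} H(Dd_H)\,dx=\int_s^{r_\Omega}P_H(\Omega_t)\,dt.
\]
Since $t\mapsto P_H(\Omega_t)$ is continuous on $[0,r_\Omega]$ (being concave as noted above), the fundamental theorem of calculus gives $-\frac{d}{ds}|\Omega_s|=P_H(\Omega_s)$ for every $s\in(0,r_\Omega)$.

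For \eqref{lemmader}, the geometric heart of the argument is the inclusion
\[
\Omega_t+s\mathcal{W}\subseteq\Omega_{t-s},\qquad 0\le s\le t.
\]
To prove it I would use the triangle inequality for the (symmetric) norm $H^o$: for $x$ with $d_H(x)>t$, $y\in\mathcal{W}$ and any $z\in\partial\Omega$,
\[
H^o(x+sy-z)\ge H^o(x-z)-sH^o(y)\ge d_H(x)-s>t-s,
\]
and the infimum over $z$ gives $d_H(x+sy)\ge t-s$, whence $x+sy\in\overline{\Omega_{t-s}}$. Convexity of $\Omega_t$ (from concavity of $d_H$) and monotonicity of $P_H$ on convex sets then give $P_H(\Omega_{t-s})\ge P_H(\Omega_t+s\mathcal{W})$, and \eqref{steinercons} produces
\[
\frac{P_H(\Omega_{t-s})-P_H(\Omega_t)}{s}\ge\frac{P_H(\Omega_t+s\mathcal{W})-P_H(\Omega_t)}{s}\longrightarrow n(n-1)W_2^H(\Omega_t)
\]
as $s\to 0^+$. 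Concavity of $P_H(\Omega_t)$ implies its differentiability a.e., and at such points the left-hand quotient converges to $-\frac{d}{dt}P_H(\Omega_t)$, establishing \eqref{lemmader}.

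For the equality case, when $\Omega=x_0+r_\Omega\mathcal{W}$ one checks directly that $d_H(x)=r_\Omega-H^o(x-x_0)$, so $\Omega_t=x_0+(r_\Omega-t)\mathcal{W}$ and hence $\Omega_t+s\mathcal{W}=\Omega_{t-s}$ exactly; the chain of inequalities above then collapses to equalities. The main obstacle I anticipate is matching the one-sided limit of difference quotients with the two-sided derivative, which I would dispatch by invoking the a.e.\ differentiability of the concave map $t\mapsto P_H(\Omega_t)$ already recorded in the paragraph preceding the lemma.
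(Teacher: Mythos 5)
Your proof is correct and follows essentially the same route as the paper: the coarea formula together with $H(Dd_H)=1$ for \eqref{lemmader0}, and the inclusion $\Omega_t+s\mathcal W\subseteq\Omega_{t-s}$ combined with monotonicity of $P_H$ and the Steiner-formula limit \eqref{steinercons} for \eqref{lemmader}. You merely supply details the paper leaves implicit (the triangle-inequality proof of the inclusion, which relies on the evenness of $H^o$ guaranteed by \eqref{eq:omo}, and the explicit verification of the equality case), so no further changes are needed.
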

\begin{proof}
 We first recall that the function $d_H$ belongs to $W^{1,\infty}(\Omega)$, and
  $H(Dd)=1$ almost everywhere. Hence, using the definition of $P_H$
  and the coarea formula, for almost every $t\in
  ]0,r_\Omega[$ we have that
  \begin{equation*}
P_H(\Omega_t)= \int_{\{d_H=t\}} H\left( \frac{D d_H}{|D d_H|}\right) dx =
\int_{\{d_H=t\}} \frac{1}{|D d_H|} dx = -\frac{d}{dt} |\Omega_t|,
  \end{equation*}
  that is \eqref{lemmader0}.

  In order to show \eqref{lemmader}, it is not difficult to prove that
  \[
  \Omega_t + \rho\mathcal W \subset \Omega_{t-\rho},\quad 0<\rho<t,
  \]
  and the equality holds when $\Omega$ is homothetic to a Wulff
  shape. Since the perimeter is monotone with respect to the inclusion
  of convex sets, the above relation and \eqref{steinercons} give
  that
  \begin{multline*}
     -\frac{d}{dt} P_H(\Omega_t) = \lim_{\rho\rightarrow 0^+}
 \frac{ P_H(\Omega_{t-\rho}) - P_H(\Omega_t) }{ \rho } \ge \\ \ge
 \lim_{\rho\rightarrow 0^+}
 \frac{P_H( \Omega_t + \rho \mathcal W) - P_H( \Omega_t ) }{ \rho } =
 n(n-1) W^H_2(\Omega_t),
  \end{multline*}
 for almost every  $t\in]0,r_\Omega[$.
\end{proof}

An immediate consequence of Lemma \ref{derquer} and that $H(Dd_H)=1$
is the following result.
\begin{lemma} \label{derquer2}
  Let $u(x)=f(d_H(x))$, where $f\colon[0,+\infty[\rightarrow[0,+\infty[$
  is a strictly $C^1$ function with $f(0)=0$. Set
  \[
  E_t=\{x\in\Omega\colon u(x)>t\}=\Omega_{f^{-1}(t)}.
  \]
  Then, for a.e. $t\in]0,r_\Omega[$,
  \[
  -\frac{d}{dt} P_H(E_t) \ge n(n-1) \frac{W^H_{2} (E_t)}{H(Du)_{u=t}}.
  \]
\end{lemma}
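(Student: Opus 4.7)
The plan is to reduce everything to Lemma \ref{derquer} by the change of variables $s=f^{-1}(t)$. Since $E_t=\Omega_{f^{-1}(t)}$ and $f$ is a strictly monotone $C^1$ function with $f(0)=0$, the function $t\mapsto f^{-1}(t)$ is absolutely continuous on the relevant interval, with derivative $1/f'(f^{-1}(t))$ almost everywhere. The concavity of $P_H(\Omega_s)$ in $s$ noted before Lemma \ref{derquer} transfers to the monotone absolute continuity of $t\mapsto P_H(E_t)$, so the derivative on the left-hand side of the claim exists for a.e. $t\in ]0,r_\Omega[$.

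The first main step is the chain rule: writing $s=f^{-1}(t)$,
\begin{equation*}
 -\frac{d}{dt} P_H(E_t) = -\frac{d}{ds} P_H(\Omega_s) \cdot \frac{ds}{dt} = \frac{1}{f'(s)}\left(-\frac{d}{ds} P_H(\Omega_s)\right).
\end{equation*}
Applying the bound \eqref{lemmader} of Lemma \ref{derquer} to the factor in parentheses, and observing that $\Omega_s = \Omega_{f^{-1}(t)} = E_t$, gives
\begin{equation*}
 -\frac{d}{dt} P_H(E_t) \ge \frac{n(n-1)\, W_2^H(E_t)}{f'(f^{-1}(t))}.
\end{equation*}

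The second step is identifying the denominator with $H(Du)\big|_{u=t}$. Since $u=f\circ d_H$, the chain rule and the $1$-homogeneity of $H$ yield $Du = f'(d_H)\, Dd_H$ and hence
\begin{equation*}
 H(Du) = f'(d_H)\, H(Dd_H) = f'(d_H)
\end{equation*}
almost everywhere, where we used $H(Dd_H)=1$ from Section 3 together with the fact that $f'>0$ on $]0,r_\Omega[$. On the level set $\{u=t\}$ this equals $f'(f^{-1}(t))$, which matches the denominator obtained above. Combining the two steps yields the claimed inequality.

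The only mildly delicate point is the justification of the chain rule almost everywhere: one needs that the set where $f'$ vanishes or where $Dd_H$ is not well-defined is swept out by a negligible set of levels $t$. Both are automatic here, since $f$ is strictly monotone $C^1$ (so $\{f'=0\}$ contributes a null set of $t$-values via the coarea formula used in Lemma \ref{derquer}) and $d_H$ is Lipschitz with $|Dd_H|\neq 0$ a.e. on $\Omega$, so the identity $H(Du)=f'(d_H)$ holds a.e. on each level set $\{u=t\}$ for a.e. $t$. No other machinery beyond Lemma \ref{derquer} is required.
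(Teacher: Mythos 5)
Your argument is correct and follows exactly the route the paper intends: the paper states Lemma \ref{derquer2} as an immediate consequence of Lemma \ref{derquer} together with $H(Dd_H)=1$, and your chain-rule computation with $s=f^{-1}(t)$, the bound \eqref{lemmader}, and the identification $H(Du)=f'(d_H)H(Dd_H)=f'(d_H)$ is precisely that deduction spelled out. The only cosmetic point is that you invoke $f'>0$ on $]0,r_\Omega[$, which strict monotonicity alone does not give, but your closing remark that critical values of a $C^1$ function form a null set of levels already disposes of this.
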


\section{Upper bounds for $\lambda_p(\Omega)$}

\subsection{P\'olya-type estimates}

\begin{theo}\label{polythm}
    Let $\Omega$ be a bounded, convex, open set of $\R^n$. Then,
    \begin{equation}
      \label{thesis}
  \lambda_p(\Omega) \le \left(\frac{\pi_p}{2}\right)^p
  \left( \frac{P_H(\Omega)}{|\Omega|} \right)^p,
\end{equation}
where $\pi_p$ is given in \eqref{defpp}.
\end{theo}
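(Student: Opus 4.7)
The plan is to prove \eqref{thesis} by plugging a carefully chosen web-type test function $u(x) = \phi(d_H(x))$ into the variational characterization \eqref{carvar}, and then reducing the resulting anisotropic Rayleigh quotient to a one-dimensional eigenvalue problem on $[0,|\Omega|]$ whose optimum involves $\pi_p$.

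First I would use the identity $H(Du) = |\phi'(d_H)|$ (from $H(Dd_H)=1$) together with the coarea formula and Lemma \ref{derquer} to rewrite both integrals of the Rayleigh quotient as one-dimensional integrals against the measure $P_H(\Omega_t)\,dt$ on $[0,r_\Omega]$:
\begin{equation*}
\frac{\int_{\Omega} H(Du)^p\,dx}{\int_{\Omega}|u|^p\,dx}
= \frac{\int_0^{r_\Omega}|\phi'(t)|^p\,P_H(\Omega_t)\,dt}
       {\int_0^{r_\Omega}|\phi(t)|^p\,P_H(\Omega_t)\,dt}.
\end{equation*}
Next I would perform the change of variable $\sigma(t)=|\Omega|-|\Omega_t|$ on $[0,r_\Omega]$, which by \eqref{lemmader0} satisfies $\sigma'(t)=P_H(\Omega_t)$ and maps $[0,r_\Omega]$ onto $[0,|\Omega|]$. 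Setting $\psi(\sigma)=\phi(t(\sigma))$ (so that $\psi(0)=0$) gives $d\sigma=P_H(\Omega_t)\,dt$ and $\phi'(t)=\psi'(\sigma)P_H(\Omega_t)$, and the denominator becomes $\int_0^{|\Omega|}|\psi|^p\,d\sigma$, while the numerator becomes $\int_0^{|\Omega|}|\psi'(\sigma)|^p\,P_H(\Omega_t(\sigma))^p\,d\sigma$.

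At this point the key monotonicity step enters: since $\Omega$ is convex, Lemma \ref{derquer} shows that $P_H(\Omega_t)$ is nonincreasing on $[0,r_\Omega]$, so $P_H(\Omega_t)\le P_H(\Omega)$ for every $t\in[0,r_\Omega]$. Pulling this out of the numerator gives
\begin{equation*}
\lambda_p(\Omega) \le P_H(\Omega)^p \cdot
\frac{\int_0^{|\Omega|}|\psi'(\sigma)|^p\,d\sigma}{\int_0^{|\Omega|}|\psi(\sigma)|^p\,d\sigma},
\end{equation*}
valid for every admissible $\psi$ with $\psi(0)=0$.

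Finally I would minimize the one-dimensional quotient over $\psi\in W^{1,p}([0,|\Omega|])$ with $\psi(0)=0$ and $\psi'(|\Omega|)$ free, choosing the explicit extremal $\psi(\sigma)=\sin_p\!\bigl(\tfrac{\pi_p}{2|\Omega|}\sigma\bigr)$. By the discussion in Section \ref{pcirc}, this $\psi$ is the first eigenfunction of $-(|\psi'|^{p-2}\psi')'=\lambda|\psi|^{p-2}\psi$ on $[0,|\Omega|]$ with mixed Dirichlet--Neumann conditions, yielding $\bigl(\tfrac{\pi_p}{2|\Omega|}\bigr)^p$. Substituting this value produces exactly the desired bound. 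The only delicate step is the change of variable and the use of the decreasing character of $t\mapsto P_H(\Omega_t)$; everything else is bookkeeping, and regularity issues are handled by Lemma \ref{derquer} (a.e.\ identities suffice since $\phi$ is smooth and $d_H\in W^{1,\infty}(\Omega)$).
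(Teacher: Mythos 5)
Your proposal is correct and follows essentially the same route as the paper: the web test function $\phi(d_H)$, the coarea reduction to a one-dimensional quotient weighted by $P_H(\Omega_t)$, the change of variable via $|\Omega_t|$ (your $\sigma=|\Omega|-|\Omega_t|$ is just an affine reparametrization of the paper's $s=\tfrac{C}{|\Omega|}|\Omega_t|$), the bound $P_H(\Omega_t)\le P_H(\Omega)$ in the numerator, and the evaluation of the resulting one-dimensional $p$-Laplacian quotient with mixed Dirichlet--Neumann conditions using the $p$-circular functions. No gaps.
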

\begin{proof}
  Let $g(t)=g(d_H(x))$, where $g$ is a nonnegative, increasing,
  sufficiently smooth function. Moreover, suppose that $g(0)=0$. We
  observe that, being  $H(d_H)=1$, by the
  coarea formula we have
  \[
  \int_\Omega H(Dg(d_H(x)))^p dx = \int_0^{r_\Omega} g'(t)^p dt \int_{d_H> t}
  \frac{1}{|Dd_H|} d\mathcal H^{n-1} = \int_0^{r_\Omega} g'(t)^p
  P_H(\{d_H>t\})dt,
  \]
  and, similarly,
  \[
  \int_\Omega g(d_H(x))^p dx = \int_0^{r_\Omega} g(t)^p P_H(\{d_H>t\})dt.
  \]
  Hence, denoting with $P(t)=P_H(\{x\in \Omega\colon d_H>t\})$,
  using $g(d_H(x))$ as test function in the Rayleigh quotient
  of \eqref{carvar}, we have
  \begin{equation}
    \label{polyacarvar}
  \lambda_p(\Omega) \le \frac{\int_0^{r_\Omega} g'(t)^p
    P(t)\,dt}{\int_0^{r_\Omega} g(t)^p P(t)\,dt}.
  \end{equation}
  Now, we perform the change of variable
  \[
  s =  \frac{C}{|\Omega|} A(t),
  \]
  where we have denoted, for the sake of simplicity, $A(t)=|\{x\in
  \Omega\colon d_H(x)>t\}|$, and $C$ is a positive constant which will
  be chosen in the next. Observe that $A(0)=|\Omega|$.

  Let $h(s)$ be the function such that
  \[
  h(s)=g(t).
  \]
  We stress that $h(C)=0$ and $h(s(t))$ is decreasing, being $g(0)=0$
  and $g(t)$ increasing. Then, substituting $h$ in
  \eqref{polyacarvar} and recalling \eqref{lemmader0} it follows that
  \begin{equation*}
  \lambda_p(\Omega) \le \frac{C^p}{|\Omega|^p}
  \frac{\displaystyle\int_0^{r_\Omega} \left[-h'\left(
      \frac{C}{|\Omega|}A(t)\right)\right]^p P(t)^p
    \left[\frac{C}{|\Omega|} P(t) \right] dt}
  {\displaystyle\int_0^{r_\Omega} \left[ h \left(
        \frac{C}{|\Omega|}A(t)\right)\right]^p
  \left[\frac{C}{|\Omega|} P(t) \right] dt}
\end{equation*}
and then
\begin{equation}
  \label{fine}
  \lambda_p(\Omega) \le
  \frac{P_H(\Omega)^p}{|\Omega|^p} C^p
  \frac{\int_0^{C} [-h'(s)]^p ds}{\int_0^{C} [h(s)]^p ds}.
\end{equation}
Now we want to choose $C$ and $h$ in order to minimize the quantity in
the right-hand side of \eqref{fine}. As matter of fact, such minimum
does not depend on $C$, being
\[
C^p \frac{\int_0^{C} [-h'(s)]^p ds}{\int_0^{C} [h(s)]^p ds} =
\frac{\int_0^{1} [-\tilde h'(\sigma)]^p d\sigma}{\int_0^{1} [\tilde
  h(\sigma)]^p d\sigma}, 
\]
where $\tilde h(\sigma)=h(C\sigma)$. Hence, choosing
$C=\frac{\pi_p}{2}$, where $\pi_p$ is defined in \eqref{defpp} we get
\begin{equation}
  \label{minpb}
  \lambda_p(\Omega) \le \left( \frac{\pi_p}{2} \right)^p
\frac{P_H(\Omega)^p}{|\Omega|^p} \min_{\mathcal A}
\frac{\int_0^{\frac{\pi_p}{2}} [-h'(s)]^p
  ds}{\int_0^{\frac{\pi_p}{2}} [h(s)]^p ds},
\end{equation}
where $\mathcal A$ is the class of positive decreasing functions $h\in
W^{1,p}([0,\frac{\pi_p}{2}])$ such that $h(\frac{\pi_p}{2})=0$. 
On the other hand, using the properties of the $p$-circular function
(see Section \ref{pcirc}) we have that
\[
\min_{\mathcal A} \frac{\int_0^{\frac{\pi_p}{2}} [-h'(s)]^p
  ds}{\int_0^{\frac{\pi_p}{2}} [h(s)]^p ds} \le
\frac{\int_0^{\frac{\pi_p}{2}} [-\cos_p(s)']^p
  ds}{\int_0^{\frac{\pi_p}{2}} [\cos_p(s)]^p ds}=1.
\]
Hence,
\begin{equation*}
  \lambda_p(\Omega) \le \left( \frac{\pi_p}{2} \right)^p
\frac{P_H(\Omega)^p}{|\Omega|^p},
\end{equation*}
and the proof is completed.
\end{proof}
\begin{rem}
  \label{remopt}
  We observe that in the Euclidean case, with $H(\xi)=|\xi|$, the
  estimate \eqref{thesis} gives an upper bound for the first
  eigenvalue of the $p$-Laplace
  operator. Moreover, we stress that, for a general norm $H$, the
  constant $\left(\frac{\pi_p}{2}\right)^p$ does not depend on $H$.
\end{rem}
\begin{rem}
  We stress that, in the case $p=2$, a direct application of the estimate
  \eqref{pol}, proved in \cite{po61} and quoted in the introduction,
  allows to obtain a result analogous to the inequality
  \eqref{thesis}. Indeed, by property \eqref{eq:lin}, if $w$ is an
  eigenfunction of the 
  Dirichlet-Laplacian related to the first eigenvalue
  $\lambda_{1,2}(\Omega)$, we have 
  \[
  \lambda_2(\Omega) \le \frac{\int_\Omega H(Dw)^2dx}{ \int_\Omega |w|^2
    dx} \le \beta^2 \frac{\int_\Omega |Dw|^2dx}{ \int_\Omega |w|^2
    dx} = \beta^2 \lambda_{1,2}(\Omega) \le \frac{\pi^2}{4} 
  \left( \frac{\beta P(\Omega)}{|\Omega|}\right)^2.
  \]
 Then, recalling that $P_H(\Omega) \le \beta P(\Omega)$, the above
 estimate for $\lambda_2(\Omega)$ gives a larger bound than
 \eqref{thesis}.  
\end{rem}
\begin{rem}
\label{optaut}
  We show that in a particular case, the estimate \eqref{thesis} is
  optimal.  Let $n=2$, and $H(x,y)=(|x|^p+|y|^p)^{1/p}$. Hence the
  operator $\mathcal Q_p$ is the pseudo-$p$-Laplacian, that is
  \[
  \mathcal Q_p u= - \left(
    |  u_x |^{p-2} u_x \right)_x - \left(
    |  u_y |^{p-2} u_y \right)_y.
  \]
  Choosing $\Omega=[0,a]\times [0,b]$, and observing that
  $P_H(\Omega)=2(a+b)$, the inequality \eqref{thesis}
becomes
  \begin{equation}
  \label{rett}
  \lambda_p(\Omega) = \frac{
    \int_0^b\int_0^a \left( |\bar u_x|^p +  |\bar u_y |^p \right )dxdy}
    {\int_0^b \int_0^a |u|^p dx dy } \le \pi_p^p
\left(\frac 1 a+ \frac 1 b\right)^p,
\end{equation}
where  $\bar u$ is a first eigenfunction. As matter of fact,
considering the Rayleigh quotient related to $\lambda_p(\Omega)$, we have
\begin{multline}
\label{quo}
\lambda_p(\Omega) \ge \frac{ \int_0^b\int_0^a \left( |\bar
    u_x|^p\right)dxdy} {\int_0^b \int_0^a |\bar u|^p dx dy } 
\ge \min_{y\in [0,b]} \frac{ \int_0^a \left( |\bar u_x|^p \right )dx}
{\int_0^a |\bar u|^p dx }\ge \\ 
\ge \min_{y\in [0,b]}\min_{\phi \in W_0^{1,p}([0,a])\setminus\{0\}} \frac{
  \int_0^a \left( |\phi'|^p \right )dx} {\int_0^a |\phi|^p dx
}=\frac{\pi_p^p}{a^p}. 
\end{multline}
We explicitly observe that the second inequality in \eqref{quo}
follows form the general inequality
\begin{equation*}
\inf_{t\in[c,d]} \frac{f(t)}{g(t)} \le \frac{\int_c^d
  f(t)\,dt}{\int_c^d g(t)\,dt} \le \sup_{t\in[c,d]} \frac{f(t)}{g(t)},
\end{equation*}
where $f$ and $g$ are two nonnegative integrable functions in $[c,d]$,
with $g\ne 0$. 

Joining \eqref{rett} and \eqref{quo} we get
\begin{equation*}
\frac{\pi_p^p}{a^p} \le \lambda_p(\Omega)\le \pi_p^p
\left(\frac 1 a+ \frac 1 b\right)^p.
\end{equation*}
Letting $b\rightarrow +\infty$, we get the required optimality. Hence,
this example proves that the constant in \eqref{thesis} cannot be
improved, in general, for any norm $H$ of $\R^n$. 
\end{rem}
\subsection{Stability}
\begin{theo}
  Let $\Omega$ be a bounded, convex, open set of $\R^n$. Then, we have
  that
  \begin{equation}
    \label{quap}
 \frac{ \lambda_p(\Omega) -
   \lambda_p(\tilde \Omega)}{\lambda_p(\Omega)} \le
C_\Omega
\left(1-\frac{n^{\frac{n}{n-1}}\kappa_n^{\frac{1}{n-1}}|\Omega|}
  {P_H(\Omega)^{\frac{n}{n-1}}} \right),
\end{equation}
where $\tilde \Omega$ is the Wulff shape centered at the origin such
that $P_H(\tilde \Omega)=P_H(\Omega)$, and
\[
C_\Omega= \frac{\|v\|_{\infty}^p }{\|v\|_p^p} |\tilde \Omega|,
\]
where $v$ is an eigenfunction of $\mathcal Q_p$ in $\tilde \Omega$.
\end{theo}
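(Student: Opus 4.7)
The plan is to feed the variational characterization \eqref{carvar} a test function built from the first eigenfunction on the Wulff shape $\tilde\Omega$, and to compare the level sets via the anisotropic distance $d_H$. Since $\tilde\Omega=\mathcal W_R$ with $R=(P_H(\Omega)/(n\kappa_n))^{1/(n-1)}$, one has $|\tilde\Omega|=P_H(\Omega)^{n/(n-1)}/(n^{n/(n-1)}\kappa_n^{1/(n-1)})\ge|\Omega|$ by \eqref{isop}, and $r_\Omega\le(|\Omega|/\kappa_n)^{1/n}\le R$. Let $v(x)=\varphi(H^o(x))$ be the positive first eigenfunction on $\tilde\Omega$, so that $\varphi\colon[0,R]\to[0,\|v\|_\infty]$ is strictly decreasing with $\varphi(R)=0$. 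Define on $\Omega$ the admissible test function
\[
u(x):=\varphi(R-d_H(x))\in W_0^{1,p}(\Omega),
\]
for which $H(Du)=|\varphi'(R-d_H)|$ a.e., since $H(Dd_H)=1$.

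The decisive ingredient is an explicit bound on $P(s):=P_H(\Omega_s)$. Combining Lemma \ref{derquer} with the Aleksandrov--Fenchel inequality \eqref{afineq} (applied to $i=1$, $j=2$) yields, for a.e.\ $s\in(0,r_\Omega)$,
\[
-P'(s)\ge n(n-1)W_2^H(\Omega_s)\ge (n-1)n^{1/(n-1)}\kappa_n^{1/(n-1)}P(s)^{(n-2)/(n-1)}.
\]
This differential inequality for $P^{1/(n-1)}$ integrates to $P(s)^{1/(n-1)}\le P_H(\Omega)^{1/(n-1)}-n^{1/(n-1)}\kappa_n^{1/(n-1)}s$, i.e.\
\[
P(s)\le n\kappa_n(R-s)^{n-1},\qquad s\in[0,r_\Omega].
\]

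Applying the coarea formula and the substitution $r=R-s$, the numerator of the Rayleigh quotient satisfies
\[
\int_\Omega H(Du)^p\,dx=\int_{R-r_\Omega}^R|\varphi'(r)|^pP(R-r)\,dr\le\int_0^R|\varphi'(r)|^p n\kappa_n r^{n-1}\,dr=\lambda_p(\tilde\Omega)\|v\|_p^p,
\]
where the first inequality uses the pointwise perimeter bound and the extension of the integration to $[0,R]$, while the last equality is radial integration on $\tilde\Omega$. For the denominator, bounding $\varphi\le\|v\|_\infty$ on the defect and using \eqref{lemmader0},
\[
\|v\|_p^p-\int_\Omega u^p\,dx\le\|v\|_\infty^p\Bigl(\kappa_n R^n-\int_0^{r_\Omega}P(s)\,ds\Bigr)=\|v\|_\infty^p\bigl(|\tilde\Omega|-|\Omega|\bigr).
\]
Feeding both into \eqref{carvar} produces
\[
\lambda_p(\Omega)\le\frac{\lambda_p(\tilde\Omega)\|v\|_p^p}{\|v\|_p^p-\|v\|_\infty^p(|\tilde\Omega|-|\Omega|)},
\]
whenever the denominator is positive; rearranging and factoring $|\tilde\Omega|$ out of the isoperimetric deficit gives \eqref{quap}. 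If the denominator is nonpositive, then $C_\Omega\cdot(1-n^{n/(n-1)}\kappa_n^{1/(n-1)}|\Omega|/P_H(\Omega)^{n/(n-1)})\ge 1$ and \eqref{quap} is trivial.

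The hard part is the pointwise bound $P(s)\le n\kappa_n(R-s)^{n-1}$, which is what synchronizes the level sets of $u$ on $\Omega$ with those of $v$ on $\tilde\Omega$: it ensures that each $\Omega_s$ shrinks at least as fast as the concentric Wulff shape $\mathcal W_{R-s}$ of matching perimeter, so the entire volume discrepancy $|\tilde\Omega|-|\Omega|$ can be pushed into the region where $\varphi$ is controlled by $\|v\|_\infty$. Everything else is routine coarea/radial manipulation.
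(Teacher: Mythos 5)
Your proposal is correct and follows essentially the same route as the paper: the same test function $\varphi(R-d_H)$, the same key comparison of level-set perimeters obtained from Lemma \ref{derquer} combined with the Aleksandrov--Fenchel inequality, and the same final Rayleigh-quotient manipulation (including the same disposal of the case where the deficit term exceeds $1$). The only difference is cosmetic: you parametrize by the distance level $s$ and integrate the differential inequality explicitly to get $P_H(\Omega_s)\le n\kappa_n(R-s)^{n-1}$, whereas the paper parametrizes by the function value $t$ and compares the differential inequality for $E_t$ with the corresponding equality for the level sets of $v$; these are equivalent statements.
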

\begin{proof}
    Without loss of generality,
 we can suppose that the quantity in the right-hand side of
 \eqref{quap} is smaller than $1$. Otherwise, \eqref{quap} is
 trivial.
 Let $R>0$ such that $\mathcal W_R=\tilde \Omega$, and define
 \[
 u(x)= \varphi(R-d_H(x)), \quad x\in\Omega,
 \]
 where $d_H$ is the  anisotropic distance function to the boundary of
 $\Omega$,  $\varphi(H^o(x))=\varphi(r)=v(x)$, and $v$ is a fixed
 positive eigenfunction of $\mathcal Q_p$ in $\tilde \Omega$,  (see
 Section \ref{seceig} for the details).
 As matter of fact, being $v$ simmetric with respect to $H^o$, we can
 define the function $g(t)=H(Dv)|_{\{v=t\}}$, $0\le
 t \le \|v\|_\infty$. Hence, by construction, the function $u$ has the
following properties:
\begin{equation*}\left\{
    \begin{array}{l}
u(x)\in W_0^{1,p}(\Omega),\\
\left. H(Du)\right|_{\{u=t\}} = g(t), \\
\|u\|_{\infty}\le \|v\|_{\infty}.
\end{array}
\right.
\end{equation*}
 We put
 \[
 E_t=\{x \in \Omega \colon u>t \},\quad \mathcal B_t=\{x \in
 \tilde\Omega \colon v > t \},
 \]
 and observe that $E_t$ and $\mathcal B_t$ are convex sets of
 $\R^n$. Lemma \eqref{derquer2} and the Aleksandrov-Fenchel
 inequalities imply
  \[
  -\frac {d}{dt} P_{H}(E_t) \ge n(n-1)\frac{W_2^H(E_t)}{g(t)} \ge
  (n-1)n^{\frac{1}{n-1}}\kappa_n^{\frac{1}{n-1}}
  \frac{P_H(E_t)^{\frac{n-2}{n-1}}}{g(t)},
  \]
  while
 \[
  -\frac {d}{dt} P_{H}(\mathcal B_t) = n(n-1)\frac{W_2^H(\mathcal
    B_t)}{g(t)} = (n-1)n^{\frac{1}{n-1}}\kappa_n^{\frac{1}{n-1}}
  \frac{P_H(\mathcal B_t)^{\frac{n-2}{n-1}}}{g(t)}.
  \]
  Together with the initial condition $P_H(E_0)=P_H(\mathcal B_0)$, we
  have that
  \begin{equation}
    \label{pp}
  P_{H}(E_t) \le P_{H} (\mathcal B_t), \quad 0<t<\| u\|_\infty.
  \end{equation}
   Now, denote with $\mu(t)=|E_t|$ and $\nu(t)=|\mathcal B_t|$. Using
   the coarea formula and the above inequality, we have that, for
   almost every $0\le t
   <\|u\|_{\infty}$,
   \[
  \begin{aligned}
    -\mu'(t)=\int_{ \{u=t\}} \frac{1}{|Du|} d\mathcal H^{n-1} &=
  \frac{1}{g(t)} \int_{\{u=t\}}  H \left( \frac{Du}{|Du|} \right) d\mathcal
  H^{n-1} = \\ &=
  \frac{P_H(E_t)} {g(t)} \le \frac{P_H(\mathcal B_t)}{g(t)}
  =\int_{\{v=t\}} \frac{1}{|Dv|}d\mathcal H^{n-1} =-\nu'(t),
\end{aligned}
\]
and then $\nu-\mu$ is a decreasing function.
Hence,
\begin{equation}
  \label{stimeden}
  \begin{aligned}
\int_{\Omega} u^p dx &=\int_0^{\|u\|_{\infty}} p t^{p-1} \mu(t)
dt =\\ &= \int_0^{\|v\|_{\infty}} p t^{p-1} \nu(t) dt
-\int_0^{\|v\|_{\infty}} p t^{p-1} \left[\nu(t) - \mu(t) \right] dt
\ge \\ &\ge \int_{\tilde\Omega} v^p dx - \left(
  |\tilde \Omega|-|\Omega|\right)\| v \|_{\infty}^p.
\end{aligned}
\end{equation}
On the other hand, by the coarea formula and \eqref{pp} we get that
\begin{equation}
    \label{stimenum}
\begin{aligned}
  \int_{\Omega} H(Du)^p dx &= \int_0^{\|u\|_{\infty}} g(t)^{p-1} \left[
  \int_{u=t} H\left(\frac{Du}{|Du|}\right) d\mathcal H^{n-1} \right]
dt =\\ &=
 \int_0^{\|u\|_{\infty}}  g(t)^{p-1} P_H(E_t) dt \le \\ &\le
 \int_0^{\|v\|_{\infty}} g(t)^{p-1} P_H(\mathcal B_t) dt =
 \int_{\tilde \Omega} H(Dv)^p dx.
\end{aligned}
\end{equation}
Hence, using \eqref{stimeden} and \eqref{stimenum} in the Rayleigh
quotient of $\mathcal Q_p$ for $u$, we have that
\begin{multline*}
  \lambda_p(\Omega) \le  \frac{\int_{\Omega} H(Du)^p dx}{
    \int_{\Omega} u^p dx} \le \frac{\int_{\tilde \Omega} H(Dv)^p
    dx}{\int_{\tilde\Omega} v^p dx - \left(
  |\tilde \Omega|-|\Omega|\right)\| v \|_{\infty}^p}=\\=
\lambda_p(\tilde \Omega) \left[ \frac{1}{1-\frac{\| v
      \|_{\infty}^p}{\| v \|_{p}^p}} \left( |\tilde
    \Omega|-|\Omega|\right) \right],
\end{multline*}
and we get the claim.
\end{proof}
\section{A lower bound for $\tau_p(\Omega)$}
In this final section we prove a lower bound for the anisotropic
$p$-torsional rigidity, namely the number $\tau_p(\Omega)>0$ such that
\begin{equation}
  \label{tors}
\tau_p(\Omega)^{p-1} = \max_{\substack{\psi\in
    W_0^{1,p}(\Omega)\setminus\{0\} \\ \psi\ge 0 }}
\dfrac{\left(\displaystyle\int_\Omega \psi \, 
    dx\right)^p}{\displaystyle\int_\Omega H(D\psi)^p dx},
\end{equation}
for a bounded convex open set $\Omega$ of $\R^n$. The following result
holds.
\begin{theo}\label{teotor}
  Let $1<p<+\infty$, and $\Omega$ as above. Then,
\begin{equation}
\label{stimator}
 \tau_p(\Omega)\ge \frac{p-1}{2p-1}
 \frac{|\Omega|^{\frac{2p-1}{p-1}}}{P_H(\Omega)^{\frac{p}{p-1}}}.
  \end{equation}

\end{theo}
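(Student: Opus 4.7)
\textbf{Proof plan for Theorem \ref{teotor}.} The strategy is to use a web test function, i.e.\ a function that depends on $x$ only through the anisotropic distance $d_H(x)$ to $\partial\Omega$, and then reduce the bound to a one-dimensional estimate. Fix $g\in C^1([0,r_\Omega])$ with $g(0)=0$, $g'\ge 0$, and set $\psi(x)=g(d_H(x))\in W_0^{1,p}(\Omega)$. Using $H(Dd_H)=1$ together with the coarea formula and Lemma \ref{derquer}, I would rewrite the two integrals entering the variational characterization \eqref{tors} as
\begin{equation*}
\int_\Omega H(D\psi)^p dx = \int_0^{r_\Omega} g'(t)^p\, P_H(\Omega_t)\, dt,
\qquad
\int_\Omega \psi\, dx = \int_0^{r_\Omega} g'(t)\, |\Omega_t|\, dt,
\end{equation*}
where the second identity follows by Fubini after writing $\psi(x)=\int_0^{d_H(x)}g'(t)\,dt$.

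The next step is Hölder's inequality applied with the splitting $g'(t)|\Omega_t|=\bigl[g'(t)P_H(\Omega_t)^{1/p}\bigr]\cdot \bigl[|\Omega_t|/P_H(\Omega_t)^{1/p}\bigr]$, which yields
\begin{equation*}
\biggl(\int_0^{r_\Omega} g'(t)|\Omega_t|\,dt\biggr)^{p}
\le
\biggl(\int_0^{r_\Omega} g'(t)^p P_H(\Omega_t)\,dt\biggr)
\biggl(\int_0^{r_\Omega}\frac{|\Omega_t|^{p/(p-1)}}{P_H(\Omega_t)^{1/(p-1)}}dt\biggr)^{p-1}.
\end{equation*}
Equality holds for $g'(t)\propto\bigl(|\Omega_t|/P_H(\Omega_t)\bigr)^{1/(p-1)}$, which is an admissible choice; plugging this into \eqref{tors} gives the preliminary estimate
\begin{equation*}
\tau_p(\Omega)\ \ge\ \int_0^{r_\Omega}\frac{|\Omega_t|^{p/(p-1)}}{P_H(\Omega_t)^{1/(p-1)}}\,dt.
\end{equation*}

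Now I would eliminate $P_H(\Omega_t)$ and $|\Omega_t|$ in favour of the global quantities $P_H(\Omega)$ and $|\Omega|$ by using two monotonicity facts implied by Lemma \ref{derquer}. First, since $t\mapsto P_H(\Omega_t)$ is decreasing, $P_H(\Omega_t)\le P_H(\Omega)$, so the factor $P_H(\Omega)^{1/(p-1)}$ may be pulled out. Second, from $-\frac{d}{dt}|\Omega_t|=P_H(\Omega_t)\le P_H(\Omega)$ one gets the linear lower bound $|\Omega_t|\ge |\Omega|-tP_H(\Omega)$ on the interval where the right-hand side is nonnegative. The same identity, integrated from $0$ to $r_\Omega$, also gives $|\Omega|=\int_0^{r_\Omega}P_H(\Omega_t)\,dt\le r_\Omega P_H(\Omega)$, so that $t^\ast:=|\Omega|/P_H(\Omega)\le r_\Omega$ and the integration may legitimately be restricted to $[0,t^\ast]$.

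Putting the pieces together and computing the resulting elementary integral with the change of variables $u=|\Omega|-tP_H(\Omega)$:
\begin{equation*}
\tau_p(\Omega)\ \ge\ \frac{1}{P_H(\Omega)^{1/(p-1)}}\int_0^{t^\ast}\bigl(|\Omega|-tP_H(\Omega)\bigr)^{p/(p-1)}dt
=\ \frac{p-1}{2p-1}\,\frac{|\Omega|^{(2p-1)/(p-1)}}{P_H(\Omega)^{p/(p-1)}},
\end{equation*}
which is precisely \eqref{stimator}. The only slightly delicate point is the optimisation of the one-dimensional problem via Hölder; once that is in place, the rest is a clean chain of monotonicity arguments and an explicit integration, with the constant $(p-1)/(2p-1)$ arising naturally from the exponent $p/(p-1)+1=(2p-1)/(p-1)$.
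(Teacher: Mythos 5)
Your argument is correct, and it reproduces the sharp constant. The first half coincides with the paper's proof: both use the web function $\psi=g(d_H)$, reduce the variational characterization \eqref{tors} via the coarea formula to a one-dimensional quotient in $P(t)=P_H(\Omega_t)$ and $A(t)=|\Omega_t|$, and arrive (via your Hölder-optimality observation, or equivalently by directly substituting $g'=(A/P)^{1/(p-1)}$ as the paper does) at the same intermediate bound
\begin{equation*}
\tau_p(\Omega)\ \ge\ \int_0^{r_\Omega}\frac{A(t)^{\frac{p}{p-1}}}{P(t)^{\frac{1}{p-1}}}\,dt.
\end{equation*}
Where you genuinely diverge is in estimating this integral. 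The paper writes the integrand as $A^{p/(p-1)}[-A']P^{-p/(p-1)}$, integrates by parts, and discards the nonnegative remainder $\frac{p}{2p-1}\int_0^{r_\Omega}(A/P)^{\frac{2p-1}{p-1}}[-P']\,dt$ (which requires checking that the boundary term vanishes at $t=r_\Omega$, via $A(t)\le P(t)(r_\Omega-t)$). You instead use the two monotonicity consequences of Lemma \ref{derquer} --- $P(t)\le P_H(\Omega)$ and hence $A(t)\ge |\Omega|-tP_H(\Omega)$ with $t^\ast=|\Omega|/P_H(\Omega)\le r_\Omega$ --- and compute an explicit elementary integral. Both routes are valid and give the same constant; yours is slightly more elementary and avoids the boundary-term justification at $t=r_\Omega$, while the paper's keeps the discarded term explicit, which makes the source of the slack in the inequality (the variation of $P(t)$) visible and is what underlies the optimality discussion for degenerating rectangles.
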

\begin{proof}
 We argue as in \cite{po61}, and use the same notation and argument of
 the proof of Theorem \ref{polythm}. For a test function
 $g(t)=g(d_H(x))$, with $g$ nonnegative increasing sufficiently smooth
 function such that $g(0)=0$, we have that
 \begin{equation}
   \label{eq:5}
  \tau_p(\Omega)^{p-1} \ge \dfrac{\left(\displaystyle\int_0^{r_\Omega}g(t)
      P_H(t)dt\right)^p}{\displaystyle\int_0^{r_\Omega} g'(t)^p
    P_H(t)dt}= \dfrac{\left(\displaystyle\int_0^{r_\Omega}g'(t)
      A(t)dt\right)^p}{\displaystyle\int_0^{r_\Omega} g'(t)^p
    P_H(t)dt}.  
 \end{equation}
  Last equality follows performing an integration by parts. Substituting
  \[
  g(t)=\int_0^{t} \left(\frac{A(s)}{P_H(s)}\right)^{1/(p-1)}ds
  \]
  in \eqref{eq:5} it follows that
  \begin{multline}\label{catena}
  \tau_p(\Omega) \ge \int_0^{r_\Omega}
  \frac{A(t)^{\frac{p}{p-1}}}{P(t)^{\frac{1}{p-1}}} dt
=  \int_0^{r_\Omega}
  \frac{A(t)^{\frac{p}{p-1}}[-A'(t)]}{P(t)^{\frac{p}{p-1}}} dt
 = \\ = \frac{p-1}{2p-1}
  \frac{|\Omega|^{\frac{2p-1}{p-1}}}{P_H(\Omega)^{\frac{p}{p-1}}} +
\frac{p}{2p-1}
\int_0^{r_\Omega}
\left(\frac{A(t)}{P(t)}\right)^{\frac{2p-1}{p-1}} [-P'(t)] dt \ge \\
\ge
\frac{p-1}{2p-1} \frac{|\Omega|^{\frac{2p-1}{p-1}}}{P_H(\Omega)^{\frac{p}{p-1}}}.
\end{multline}
We precise that the second equality in \eqref{catena} follows by an
integration by parts, taking into account that, being $P(t)$
decreasing and
\[
A(t) =\int_t^{r_\Omega} P(t) dt \le P(t)(r_\Omega-t), 
\]
we have
\[
\frac{|A(t)|^{\frac{2p-1}{p-1}}}{P(t)^{\frac{p}{p-1}}}\rightarrow
0\text{ as }t\rightarrow r_\Omega.
\]
\end{proof}
\begin{rem}
  We stress that in the Euclidean case, with $H(\xi)=|\xi|$, the
  estimate \eqref{tors} extends the lower bound for the $p$-torsional
  rigidity proved in \cite{cfg02}. Moreover, we again observe that
  the constant is independent on $H$.
\end{rem}
\begin{rem}
  We stress that, similarly as observed in Remark \ref{remopt},
  it is possible to apply the inequality
  \eqref{gazz2}, known in the Euclidean case, obtaining an
  estimate analogous to \eqref{stimator}. More precisely, if
  $\tau_p(\Omega)$ is the anisotropic $p$-torsional rigidity, then
  \[
  \tau_p(\Omega) \ge \frac{p-1}{2p-1}
  \frac{|\Omega|^{\frac{2p-1}{p-1}}}{\left(\beta
      P(\Omega)\right)^{\frac{p}{p-1}}}.
  \]
Again, the above estimate gives a smaller bound than \eqref{stimator}.  
\end{rem}

\begin{rem}
We can show that in a particular case, the estimate \eqref{stimator}
is optimal.  Using the same assumptions and notations
of Remark \ref{optaut}, the inequality \eqref{stimator} becomes
  \begin{equation}
  \label{torrett}
  \tau_p(\Omega) \ge  \frac{p-1}{2p-1} \quad
  \frac{(ab)^{\frac{2p-1}{p-1}}}{2^{\frac{p}{p-1}}
    (a+b)^{\frac{p}{p-1}}}. 
\end{equation}
On the other hand, it is not difficult to prove that the $p$-torsional
rigidity in $[0,a]$ is  
\begin{equation}
  \label{torint}
  \tau_p([0,a]) =  \frac{p-1}{2p-1} \quad
  \frac{a^{\frac{2p-1}{p-1}}}{2^{\frac{p}{p-1}}}. 
\end{equation}
Moreover, reasoning as in Remark \ref{optaut}, by the definition of
$\tau_p(\Omega)$ and using the H\"older inequality we obtain
\begin{gather}
\begin{split}
\label{ott}
\tau_p(\Omega) &=  \frac{\left( \int_0^b\int_0^a u_p\,dxdy\right)^p}
{\int_0^b \int_0^a \left( |(u_p)_x|^p + |(u_p)_y|^p\right) dx dy }\le
b^{p-1}\frac{ \int_0^b\left(\int_0^a u_p \,dx\right)^p dy} {\int_0^b
  \left(\int_0^a |(u_p)_x|^p dx\right) dy } \\[.2cm] &\le  \sup_{y\in [0,b]}
b^{p-1}\frac{ \left(\int_0^a u_p \,dx\right)^p} {\int_0^a |(u_p)_x|^p
  dx } \le b^{p-1} \tau^{p-1}_p([0,a]). 
\end{split}
\end{gather}
Joining \eqref{torrett}, \eqref{ott} and \eqref{torint} we get
\begin{equation}
\label{torult}
\frac{p-1}{2p-1} \quad
\frac{(ab)^{\frac{2p-1}{p-1}}}{2^{\frac{p}{p-1}}
  (a+b)^{\frac{p}{p-1}}} \le \tau_p(\Omega)\le b \frac{p-1}{2p-1}
\quad \frac{a^{\frac{2p-1}{p-1}}}{2^{\frac{p}{p-1}}}. 
\end{equation}
Finally, \eqref{torult} gives 
\begin{equation*}
\left( \dfrac{b}{a+b}\right)^{\frac{p}{p-1}} \le \tau_p(\Omega) \le 1,
\end{equation*}
and letting $b\rightarrow +\infty$ we get the required
optimality. Hence, 
this example proves that the constant in \eqref{stimator} cannot be
improved, in general, for any norm $H$ of $\R^n$. 
\end{rem}


\begin{thebibliography}{10}

\bibitem{aflt}
A.~Alvino, V.~Ferone, P.-L. Lions, and G.~Trombetti.
\newblock {Convex symmetrization and applications}.
\newblock {\em Ann. Inst. H. Poincar\'{e} Anal. Non Lin\'{e}aire},
  14(2):275--293, 1997.

\bibitem{and}
B.~Andrews.
\newblock {Volume-preserving Anisotropic Mean Curvature Flow}.
\newblock {\em Indiana Univ. Math. J.}, 50(2):783--827, 2001.

\bibitem{bccn}
G.~Bellettini, V.~Caselles, A.~Chambolle, and M.~Novaga.
\newblock {The volume preserving crystalline mean curvature flow of
  convex sets in $\mathbb R^N$}.
\newblock {\em Journal de Math\'ematiques Pures et Appliqu\'ees},
  92(5):499--527, Nov. 2009.

\bibitem{bp}
G.~Bellettini and M.~Paolini.
\newblock {Anisotropic motion by mean curvature in the context of {F}insler
  geometry}.
\newblock {\em Hokkaido Math. J.}, 25:537--566, 1996.

\bibitem{bfk}
M.~Belloni, V.~Ferone, and B.~Kawohl.
\newblock {Isoperimetric inequalities, {W}ulff shape and related questions for
  strongly nonlinear elliptic operators}.
\newblock {\em Zeitschrift fur Angewandte Mathematik und Physik (ZAMP)},
  54(5):771--783, Sept. 2003.

\bibitem{bkj06}
M.~Belloni, B.~Kawohl, and P.~Juutinen.
\newblock {The p-Laplace eigenvalue problem as {$p\rightarrow \infty$} in a
  Finsler metric}.
\newblock {\em J. Eur. Math. Soc. (JEMS)}, 8(1):123--138, 2006.

\bibitem{bbcdg}
P.~Binding, L.~Boulton, J.~{{\v C}epi{\v c}ka}, P.~Dr{\'a}bek, and P.~Girg.
\newblock {Basis properties of eigenfunctions of the $p$-{L}aplacian}.
\newblock {\em Proc. Am. Math. Soc.}, 134(12):3487--3494, 2006.

\bibitem{bnt10}
B.~Brandolini, C.~Nitsch, and C.~Trombetti.
\newblock {An upper bound for nonlinear eigenvalues on convex domains
  by means of the isoperimetric deficit}.
\newblock {\em Archiv der Mathematik}, 94(4):391--400, Mar. 2010.

\bibitem{bu}
H.~Busemann.
\newblock {The isoperimetric problem for {M}inkowski area}.
\newblock {\em Amer. J. Math.}, 71:743--762, 1949.

\bibitem{ciasal}
A.~Cianchi and P.~Salani.
\newblock {Overdetermined anisotropic elliptic problems}.
\newblock {\em Math. Ann.}, 345(4):859--881, 2009.

\bibitem{cgweb}
G.~Crasta and F.~Gazzola.
\newblock {Web functions: survey of results and perspectives}.
\newblock {\em Rend. Istit. Mat. Univ. Trieste}, 33:313--326, 2001.

\bibitem{cm07}
G.~Crasta and A.~Malusa.
\newblock {The distance function from the boundary in a Minkowski space}.
\newblock {\em Trans. Amer. Math. Soc.}, 359(12):5725--5759, 2007.

\bibitem{dpf}
B.~Dacorogna and C.-E. Pfister.
\newblock {Wulff theorem and best constant in {S}obolev inequality}.
\newblock {\em J. Math. Pures Appl. (9)}, 71(2):97--118, 1992.

\bibitem{dp87}
M.~del Pino.
\newblock {\em {Acerca de un problema cuasilineal de segundo orden}}.
\newblock PhD thesis, 1987.

\bibitem{dpg2}
F.~{Della Pietra} and N.~Gavitone.
\newblock {Symmetrization for Neumann anisotropic problems and related
  questions}.
\newblock {\em Advanced Nonlinear Stud.}, 12(2):219--235, 2012.

\bibitem{dpg4}
F.~{Della Pietra} and N.~Gavitone.
\newblock {Anisotropic elliptic equations with general growth in the
  gradient and {H}ardy-type potentials},
\newblock{\em Preprint}, 2012
\newblock{\href{http://arxiv.org/abs/1209.0928}{\tt
    ArXiv:1209.0928}}.

\bibitem{dpg5}
F.~{Della Pietra} and N.~Gavitone.
\newblock {Upper bounds for the eigenvalues of Hessian equations}.
\newblock {\em Preprint}, 2012,
\newblock   \newblock{\href{http://arxiv.org/abs/1209.0877}
  {\tt ArXiv:1209.0877}}.

\bibitem{dpg1}
F.~{Della Pietra} and N.~Gavitone.
\newblock {The relative isoperimetric inequality in the plane: the
  anisotropic case}.
\newblock {\em J. Convex. Anal.}, 2013 (in press).
\newblock   \newblock{\href{http://arxiv.org/abs/1209.0917}
  {\tt ArXiv:1209.0917}}.

\bibitem{dpg3}
F.~{Della Pietra} and N.~Gavitone.
\newblock {Anisotropic elliptic problems involving Hardy-type potentials}.
\newblock {\em J. Math. Anal. Appl.}, 397(2):800-813, 2013.


\bibitem{etpolya}
L.~Esposito and C.~Trombetti.
\newblock {Convex symmetrization and {P}\'olya-{S}zeg\"o inequality}.
\newblock {\em Nonlinear Analysis: Theory, Methods \& Applications},
  56(1):43--62, Jan. 2004.

\bibitem{fvpolya}
A.~Ferone and R.~Volpicelli.
\newblock {Convex rearrangement: equality cases in the {P}\'olya-{S}zeg\"o
  inequality}.
\newblock {\em Calc. Var. Partial Differential Equations},
21(3):259--272, Nov. 2004.

\bibitem{ferkaw}
V.~Ferone and B.~Kawohl.
\newblock {Remarks on a {F}insler-Laplacian.}
\newblock {\em Proc. Am. Math. Soc.}, 137(1):247--253, 2009.

\bibitem{fomu}
I.~Fonseca and S.~M\"{u}ller.
\newblock {A uniqueness proof for the {W}ulff theorem}.
\newblock {\em Proc. Roy. Soc. Edinburgh Sect. A}, 119(1-2):125--136, 1991.

\bibitem{cfg02}
I.~Fragal\`a, F.~Gazzola and J.~Lamboley.
\newblock {Sharp bounds for the $p$-torsion of convex planar domains}
\newblock{\em preprint}, 2011,
\newblock{\href{http://arxiv.org/abs/1112.5050}
  {\tt ArXiv:1112.5050}}.

\bibitem{kn08}
B.~Kawohl and M.~Novaga.
\newblock {The $p$-{L}aplace eigenvalue problem as $p\rightarrow 1$ and
  {C}heeger sets in a {F}insler metric}.
\newblock {\em J. Convex. Anal.}, 15(3):623--634, 2008.

\bibitem{l95}
P.~Lindqvist.
\newblock {Some remarkable sine and cosine functions}.
\newblock {\em Ricerche Mat.}, 44:269--290, 1995.

\bibitem{carlo}
C.~Nitsch.
\newblock {An isoperimetric result for the fundamental frequency via domain
  derivative}.
\newblock {\em Preprint}, pages 1--12, Jan. 2012
\newblock   \newblock{\href{http://arxiv.org/abs/1201.5328}
  {\tt ArXiv:1201.5328}}.

\bibitem{pw61}
L.~E. Payne and H.~Weinberger.
\newblock {Some isoperimetric inequalities for membrane frequencies and
  torsional rigidity}.
\newblock {\em J. Math. Anal. Appl.}, 2:210--216, 1961.

\bibitem{po61}
G.~P{\'o}lya.
\newblock {Two more inequalities between physical and geometrical quantities}.
\newblock {\em J. Indian Math. Soc. (N.S.)}, 24:413--419, 1961.

\bibitem{rw99}
W.~Reichel and W.~Walter.
\newblock {Sturm-Liouville type problems for the $p$-{L}aplacian under
  asymptotic non-resonance conditions}.
\newblock {\em Journal of Differential Equations}, 156(1):50--70, July 1999.

\bibitem{schn}
R.~Schneider.
\newblock {\em {Convex bodies: the {B}runn-{M}inkowski theory}}.
\newblock Cambridge University Press, Cambridge, 1993.

\end{thebibliography}

\end{document}